\newcommand \datum {February 28, 2021}
\newcommand \dateone {February 28, 2021}
\numberwithin{equation}{section}
\theoremstyle{plain}
 \newtheorem{theorem}{Theorem}[section]
 \newtheorem{lemma}[theorem]{Lemma}
 \newtheorem{corollary}[theorem]{Corollary}
\theoremstyle{definition}
 \newtheorem{definition}[theorem]{Definition}
 \newtheorem{convention}[theorem]{Convention}
 \newtheorem{remark}[theorem]{Remark}
 \newtheorem*{addedone}{Added on \dateone} 
\newenvironment{enumeratei}{\begin{enumerate}[\quad\upshape (i)]} {\end{enumerate}}
\newcommand \tbf[1] {\textbf{#1}}  
\newcommand \Jir [1] {\textup J(#1)} 
\newcommand \Mir [1] {\textup{M}(#1)}
\newcommand \Nplu {\mathbb N^+}
\newcommand \Rplu {\mathbb R^+}
\renewcommand \phi{\varphi}
\newcommand \restrict [2] {{#1}\kern-1pt \rceil_{\kern-1pt #2}}
\DeclareMathOperator{\Con}{Con}
\newcommand \brho {{\boldsymbol{\rho}}}
\newcommand \bsigma {{\boldsymbol{\sigma}}}
\newcommand \bTheta {{\boldsymbol{\Theta}}}
\newcommand \btau {{\boldsymbol{\tau}}}
\newcommand \leftb [1]  {\textup{C}_{\textup{left}}(#1)} 
\newcommand \rightb [1] {\textup{C}_{\textup{right}}(#1)} 
\newcommand \ideal[1]{\mathord\downarrow #1}
\newcommand \filter[1]{\mathord\uparrow #1}
\newcommand \Edges[1] {\textup{Edge}(#1)} 
\newcommand \rellambda {\mathrel{\lambda}}
\newcommand \bdia {\mathcal C_1}
\newcommand \cdia {\mathcal C_2}
\newcommand \con {\textup{con}}
\newcommand \intv [1]{{\mathfrak #1}}  
\newcommand \inp {{\mathfrak p}}
\newcommand \inq {{\mathfrak q}} 
\newcommand \set [1]{\{#1\}}
\newcommand \tuple [1] {\langle #1 \rangle}
\newcommand \pair [2] {\tuple{#1,#2}}
\newcommand \cornl [1] { \textup{lc}(#1) }
\newcommand \cornr [1] { \textup{rc}(#1) } 
\newcommand \Sn[1] {S_7^{(#1)}}
\newcommand \foot [1] {\textup{Foot}(#1)}
\newcommand \Bot[1] {\beta_{#1}}
\newcommand \Enl [1] {\textup{Lit}(#1)}
\newcommand \pEnl [1] {\textup{Lit}^+(#1)}
\newcommand \LEnl [1] {\textup{LeftLit}(#1)}
\newcommand \REnl [1] {\textup{RightLit}(#1)}
\newcommand \Roof [1] {\textup{Roof}(#1)}
\newcommand \Floor [1] {\textup{Floor}(#1)}
\newcommand \LRoof [1] {\textup{LRoof}(#1)}
\newcommand \LFloor [1] {\textup{LFloor}(#1)}
\newcommand \RRoof [1] {\textup{RRoof}(#1)}
\newcommand \RFloor [1] {\textup{RFloor}(#1)}
\newcommand \ashape {$\pmb{\pmb{\wedge}}$}
\newcommand \Lamps[1] {\textup{Lamp}(#1)}
\newcommand \rhgeomb {\brho_{\textup{Body}}}
\newcommand \lrhgeomb {\brho_{\textup{LRBody}}}
\newcommand \rhgeomc {\brho_{\textup{CircR}}}
\newcommand \lrhgeomc {\brho_{\textup{LRCircR}}}
\newcommand \rhfoot {\brho_{\textup{foot}}}
\newcommand \rhinfoot {\brho_{\textup{infoot}}}
\newcommand \rhinpfoot {\brho_{\textup{in+foot}}}
\newcommand \rhalg {\brho_{\textup{alg}}}
\newcommand \cirrec [1] {\textup{CircR}(#1)}
\newcommand \body [1] {\textup{Body}(#1)}
\newcommand \topedge [1] {\textup{TopE}(#1)}
\newcommand \Trajs [1] {\textup{Traj}(#1)}
\newcommand \jirdprec {\mathrel{\prec_{\kern-1pt\Jir D}}}
\newcommand \Max [1] {\textup{Max}(#1)}
\newcommand \lmax[1] {\textup{LeftMax}(#1)}
\newcommand \rmax[1] {\textup{RightMax}(#1)}
\newcommand \Her [1] {\textup{DnSt}(#1)}
\newcommand \lift [1] {\textup{lifted}(#1)}
\newcommand \Peak [1] {\textup{Peak}(#1)}
\newcommand \cov [1]  {#1^+}
\newcommand \Lmp [1]  {\textup{Lmp}(#1)}
\newcommand \lprec  {\mathrel{\prec_{\textup{left}}}}
\newcommand \rprec  {\mathrel{\prec_{\textup{right}}}}
\newcommand \defiff {\overset{\textup{def}}{\iff}}
\newcommand \idez [1] {``#1''}
\newcommand\red[1]{{\textcolor{red}{#1}}}
\newcommand \nothing [1] {}
\begin{document}
\title[Lamps in slim rectangular lattices]
{Lamps in slim rectangular planar semimodular lattices}

\author[G.\ Cz\'edli]{G\'abor Cz\'edli}
\email{czedli@math.u-szeged.hu}
\urladdr{http://www.math.u-szeged.hu/~czedli/}
\address{University of Szeged, Bolyai Institute. 
Szeged, Aradi v\'ertan\'uk tere 1, HUNGARY 6720}

\begin{abstract} A planar (upper) semimodular lattice $L$ is \emph{slim} if the five-element nondistributive modular lattice $M_3$ does not occur among its sublattices. (Planar lattices are finite by definition.) 
\emph{Slim rectangular lattices} as particular slim planar semimodular lattices  were defined
by G.\ Gr\"atzer and E.\ Knapp in 2007.  In 2009, they also  proved that the congruence lattices of slim planar semimodular lattices with at least three elements are the same as those of slim rectangular lattices.
In order to provide an effective tool for studying these congruence lattices, we introduce the concept of \emph{lamps} of slim rectangular lattices and prove several of their properties. 
Lamps and several tools based on them allow us to prove in a new and easy way that the congruence lattices of slim planar semimodular lattices satisfy the two previously known properties.  
Also, we use  lamps to prove that these congruence lattices satisfy four new properties including the \emph{two-pendant four-crown property} and the \emph{forbidden marriage property}.
\end{abstract}

\thanks{This research was supported by the National Research, Development and Innovation Fund of Hungary under funding scheme K 134851.}

\subjclass {06C10}

\dedicatory{Dedicated to L\'aszl\'o K\'erchy on his seventieth birthday}

\keywords{Rectangular lattice,  slim  semimodular lattice, multi-fork extension, lattice diagram, edge of normal slope, precipitous edge, lattice congruence, two-pendant four-crown property, lamp, congruence lattice, forbidden marriage property}

\date{\datum\hfill{\red{Hint: check the author's website for possible updates}}}

\maketitle

\subsection*{Note on the dedication}
Professor \emph{L\'aszl\'o K\'erchy} is the previous  editor-in-chief of Acta Sci.\ Math.\ (Szeged). 
I have been knowing him since 1967, when I enrolled in a high school where he was a  second-year student with widely acknowledged mathematical talent.  His influence had played a  role that I became a student at the Bolyai (Mathematical) Institute in Szeged. Furthermore, he helped me to become one of his roommates in  Lor\'and E\"otv\"os University Dormitory in 1972. Leading by his example, I could become a member of the Bolyai Institute right after my graduation. With my gratitude, I dedicate this paper to his seventieth birthday.

\section{Introduction} 
The theory of planar semimodular lattices has been an intensively studied part of lattice theory since Gr\"atzer and Knapp's pioneering \cite{gratzerknapp1}. The key role in the theory of these lattices is played by \emph{slim} planar semimodular lattices; their definition is postponed to Section~\ref{sectionlamps}.
%
%
The importance of slim planar semimodular lattices is surveyed,  for example, in Cz\'edli and Kurusa~\cite{czgkurusa}, Cz\'edli and Gr\"atzer~\cite{czgggltsta}, Cz\'edli and Schmidt~\cite{czgschtJH}, and
Gr\"atzer and Nation~\cite{gr-nation}. \emph{Slim rectangular lattices}, to be defined later, were introduced in Gr\"atzer and Knapp~\cite{gratzerknapp3}  as  particular slim planar semimodular lattices and they will play a crucial role in our proofs.  
The study of \emph{congruence lattices} $\Con L$ of slim planar semimodular lattices $L$ goes back to Gr\"atzer and Knapp~\cite{gratzerknapp3}. 
These congruence lattices are finite distributive lattices and, in addition, we know from Cz\'edli~\cite{czganotesps} and   Gr\"atzer~\cite{ggonaresczg}, \cite{ggconprimperst}, and \cite{ggSPS8}  that they have special properties.

\subsection*{Target} One of our targets is to develop effective tools to derive the above-mentioned special properties in a new and easy way and to present four new properties. To do so, we are going to define the  \emph{lamps} of a slim rectangular lattice $L$ so that the set of lamps becomes a poset (partially ordered set) isomorphic to the poset $\Jir {\Con L}$ of join-irreducible congruences of $L$. It remains a problem whether the properties recalled or proved in the present paper and in Cz\'edli and Gr\"atzer~\cite{czgginprepar} characterize the congruence lattices of slim planar semimodular lattices.

\subsection*{Outline} For comparison with our lamps, 
the rest of the present section mentions the known ways to describe $\Jir {\Con L}$ for a finite lattice $L$.  In Section~\ref{sectionlamps}, we 
recall the concept of slim planar semimodular lattices, that of   slim rectangular lattices, and that of their $\bdia$-diagrams. Also in Section~\ref{sectionlamps}, we introduce the concept of lamps of these lattices and prove our (Main) Lemma~\ref{lemmamain}. This lemma provides the main tool for \idez{illuminating} the congruence lattices of slim planar semimodular lattices. 
In Section~\ref{sectioneasycons}, further tools are given and several  consequences of (the Main) Lemma~\ref{lemmamain} are proved. In particular, this section proves that  the congruence lattices of slim planar semimodular lattices satisfy  both previously known properties, see Corollaries~\ref{corollstzRsG} and \ref{coroltwocover}, and two new properties, see Corollaries~\ref{corolnWdWr} and \ref{coroldioec}.
Section~\ref{sectionfourcrown} defines the two-pendant four-crown property and the forbidden marriage property, and proves that the congruence lattices of  slim planar semimodular lattices satisfy these two properties, too; see Theorem~\ref{thmmain}.

\subsection*{Comparison with earlier approaches to $\Jir {\Con L}$}
Let $L$ be a finite lattice; its \emph{congruence lattice} is denoted by $\Con L$.   Since $\Con L$ is distributive, it is determined by the poset  $\Jir{\Con L}=\tuple{\Jir{\Con L};\leq}$ of its nonzero \emph{join-irreducible elements}. There are three known ways to describe this poset.
 
First, one can use the \emph{join dependency relation} defined on $J(L)$; see Lemma 2.36 of the monograph Freese, Je\v zek, and Nation~\cite{fjnbook}, where this relation is attributed to  Day~\cite{day}. 

Second, Gr\"atzer~\cite{ggconprimperst} takes (and well describes) the prime-perspectivity relation on the set of \emph{prime intervals} of $L$. His description becomes more powerful if $L$ happens to be a slim planar semimodular lattice: for such a lattice, Gr\"atzer's Swing Lemma applies, see  \cite{ggswinglemma} and see also
Cz\'edli, Gr\"atzer, and Lakser~\cite{czggghlswing} and Cz\'edli and Makay~\cite{czgmakay}. 

Third, but only for a slim rectangular lattice $L$, Cz\'edli~\cite{czgtrajcolor} defined a relation on the set of \emph{trajectories} of $L$ while Gr\"atzer~\cite{ggswinglemma} defined an analogous relation on the set of prime intervals.  Although it happened in a different way, 
Theorem 7.3(ii) of Cz\'edli~\cite{czgtrajcolor} indicates that the dual of the approach based on join dependency relation could also have been used 
to derive a more or less similar description of $\Jir{\Con L}$.

A relation $\rho\subseteq X^2$ on a set $X$ is a \emph{quasiorder} (also called \emph{preorder}) if it is reflexive and transitive. 
Each of the three approaches mentioned above defines only a quasiorder on a set $X$ in the first step; this set consists of 
join-irreducible elements, prime intervals, or trajectories, respectively. In the next step, we have to form 
the quotient set $X/(\rho \cap \rho^{-1})$ and equip it with the 
quotient relation  $\rho/(\rho\cap\rho^{-1})$ to obtain $\tuple{\Jir{\Con L};\leq}$ up to isomorphism. For a slim rectangular lattice $L$,  our lamps provide a more efficient description of $\Jir{\Con L}$ since we do not have to form a quotient set.

\section{From diagrams to lamps}\label{sectionlamps}
By definition, planar lattices are finite. A \emph{slim planar semimodular lattice} is a planar (upper) semimodular lattice $L$ such that one of the following three conditions holds:
\begin{enumeratei}
\item{} $M_3$, the five-element nondistributive modular lattice, is not
a sublattice of $L$,
\item $M_3$ is not a cover-preserving sublattice of $L$,
\item $\Jir L$, the \emph{set of nonzero join-irreducible elements} of $L$, is the union of two chains;
\end{enumeratei}
see Gr\"atzer and Knapp~\cite{gratzerknapp1} and Cz\'edli and Schmidt~\cite{czgschtJH}, or the book chapter Cz\'edli and Gr\"atzer~\cite{czgggltsta}
 for the equivalence of these three conditions for planar semimodular lattices (but not for other lattices.)

Let $L$ be a  slim planar semimodular  lattice; we \emph{always} assume that a planar diagram of $L$ is fixed. The \emph{left boundary chain} and the \emph{right boundary chain} of $L$ are denoted by 
$\leftb L$ and $\rightb L$, respectively. Here and at several other concepts occurring later, we heavily rely on the fact that the diagram of $L$ is fixed; indeed, $\leftb L$ and $\rightb L$ depend on the diagram, not only on $L$. 

Following Gr\"atzer and Knapp~\cite{gratzerknapp3}, a slim planar semimodular lattice is called a \emph{slim rectangular lattice}
if $|L|\geq 4$, $\leftb L$ has exactly one doubly irreducible element, $\cornl L$, 
 $\rightb L$ has exactly one doubly irreducible element, $\cornr L$, and these two doubly irreducibly elements are complementary, that is, $\cornl L\vee\cornr L=1$ and $\cornl L\wedge \cornr L=0$. 
Here $\cornl L$ and $\cornr L$ are called the \emph{left corner} (element) and the \emph{right corner} (element) of the rectangular lattice $L$. Note that  $|L|\geq 4$ above can be replaced by  $|L|\geq 3$. Note also that the definition of rectangularity does not depend on how the diagram is fixed since  $\cornl L$ and $\cornr L$ are the only doubly irreducible elements of $L$. 

Let us emphasize that a slim rectangular lattice is planar and semimodular by definition, whereby the title of the paper is redundant. The purpose of this redundancy is to  give more information about the content of the paper.

The (principal) ideals $\ideal \cornl L$ and $\ideal \cornr L$ are chains and they are called the \emph{bottom left boundary chain} and the \emph{bottom right boundary chain}, respectively, while the filters $\filter \cornl L$ and $\filter \cornr L$ are also chains, the \emph{top left boundary chain} and  \emph{top right boundary chain}, respectively. 
The \emph{lower boundary} and the \emph{upper boundary} of $L$ are $\ideal \cornl L\cup\ideal \cornr L$ and $\filter \cornl L\cup\filter \cornr L$, respectively. 
Note also that $\Jir L\cup\set 0$ equals the lower boundary $\ideal\cornl L\cup \ideal\cornr L$,  but for the set $\Mir L$ of non-unit \emph{meet-irreducible elements}, we only have that $\filter\cornl L\cup\filter\cornr L\subseteq \Mir L\cup\set 1$. For example, the lattices $\Sn n$ for $n\in\Nplu:=\set{1,2,3,\dots}$, defined in 
 Cz\'edli~\cite{czgtrajcolor} and presented here in Figure~\ref{figsn} for $n\leq 4$, are slim rectangular lattices. 

\begin{figure}[htb]
\centerline
{\includegraphics[width=\textwidth]{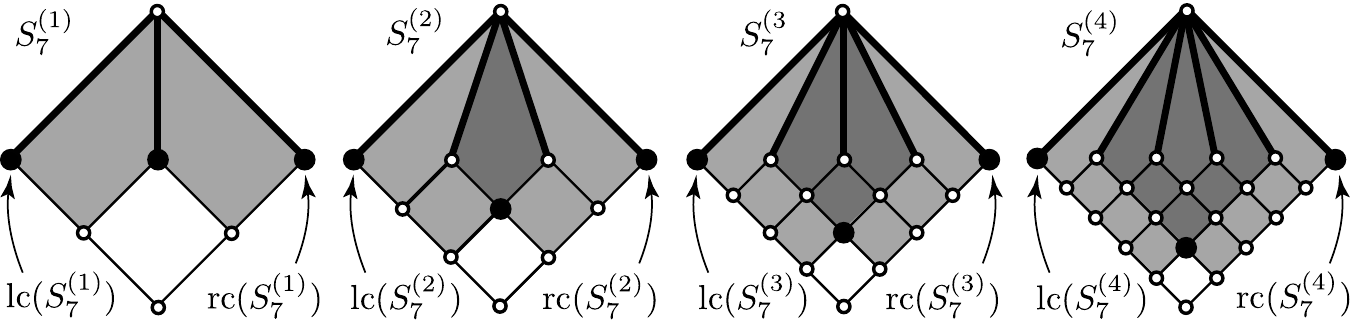}}      
\caption{$\Sn 1$, $\Sn 2$, $\Sn 3$, and  $\Sn 4$}\label{figsn}
\end{figure}

If $p$ and $q$ are elements of a lattice such that $p\prec q$, then the \emph{prime interval}, that is, the two-element interval $[p,q]$ is an \emph{edge} of the diagram. 
Following Cz\'edli~\cite{czgrectectdiag}, we need the following concepts. 

\begin{definition}[Types of diagrams] The \emph{slope} of the line $\set{\pair x x: x\in\mathbb R}$ and that of the line $\set{\pair x {-x}: x\in\mathbb R}$ are called \emph{normal slopes}. This allows us to speak of lines, line segments, and edges of normal slopes. For example, an edge $[p, q]$ of a lattice diagram is of a normal slope iff the angle that this edge makes with a horizontal line is $\pi/4$ ($45^\circ$) or  $3\pi/4$ ($135^\circ$).   If this angle is strictly between $\pi/4$ and  $3\pi/4$, then the edge is \emph{precipitous}. For examples, vertical edges are precipitous. 
We say that a diagram of a slim rectangular lattice $L$ belongs to $\bdia$ or, in other words, it is a \emph{$\bdia$-diagram} if every edge $[p, q]$ such that $p\in \Mir L\setminus(\leftb L\cup\rightb L)$ is precipitous and all the other edges are of normal slopes. A $\bdia$-diagram of $L$ belongs to $\cdia$ or, shortly saying, it is a \emph{$\cdia$-diagram} if any two edges on the lower boundary are of the same geometric length. 
\end{definition}

The diagrams in Figures~\ref{figsn}, \ref{figslns}, \ref{figftnt}, and \ref{figabcd}, $L_6=L$ in Figure~\ref{figmlsrNgrt}, and 
the diagrams denoted by $L$ in Figures~\ref{figlot} and \ref{figleri} are $\cdia$-diagrams.  In addition to these $\cdia$-diagrams, the diagrams in Figures~\ref{figmfXtns} and \ref{figmlsrNgrt} are $\bdia$-diagrams while the diagram in Figure~\ref{figfbmr} can be a part of a $\bdia$-diagram. In fact, 
\emph{all} diagrams of slim rectangular lattices in this paper are $\bdia$-diagrams. Note that we believe  that only $\bdia$-diagrams can give satisfactory insight into the congruence lattices of slim rectangular lattices. 
For diagrams, drawn or not,  let us agree in the following.

\begin{convention}\label{convxpllzT} In the rest of the paper, all diagrams of slim rectangular lattices are assumed to be $\bdia$-diagrams.   Furthermore, $L$ will always denote a slim rectangular lattice with a fixed $\bdia$-diagram. 
\end{convention}

The reader may wonder how the new concepts in the following definition obtained their name; the explanation will be given in the paragraph preceding Definition~\ref{defenlstS}.

\begin{definition}[Lamps] Let $L$ be a slim rectangular lattice with a fixed $\bdia$-diagram.
\begin{enumeratei}
\item
An edge $\intv n=[p,  q]$ of   $L$ is a \emph{neon tube} if $p\in \Mir L$. The elements $p$ and $q$ are the \emph{foot}, denoted by $\foot {\intv n}$, and the \emph{top} of this neon tube. Clearly, a neon tube is determined by its foot. 
\item If an edge  $[p, q]$ is a neon tube such that $p$ belongs to the boundary of $L$ (equivalently, if $\ideal p$ contains a doubly irreducible element),
then  $[p, q]$ is also a \emph{boundary lamp} (with a unique neon tube $[p,q]$).   If $I=[p,q]$ is a boundary lamp, then $p$ is called the \emph{foot} of $I$ and is denoted by $\foot I$ while $\Peak I:=q$ is the \emph{peak} of $I$. Note the terminological difference: neon tubes have tops but lamps have peaks.
\item
Assume that $q\in L$ is the top of a neon tube whose foot is not on the boundary $\leftb L\cup\rightb L$ of $L$, and let 
\begin{equation}
\Bot q:=\bigwedge\set{p_i: [p_i, q]\text{ is a neon tube and $p_i\notin\leftb L\cup\rightb L$}}.
\label{eqdkHtFgWgwLNm}
\end{equation}
Then the interval $I:=[\Bot q,q]$ is an \emph{internal lamp} of $L$.  The prime intervals $[p_i,  q]$ such that $p_i\in \Mir L$ 
but  $p_i\notin\leftb L\cup\rightb L$ are the \emph{neon tubes} of this lamp. If $I$ is an internal lamp, then either $I$ is a neon tube and we say that $I$ has a unique neon tube, or $I$ has more than one neon tubes. The element $q$ is the \emph{peak} of the lamp $I$ and it is denoted by $\Peak I$ while $\foot I:=\Bot q$ is the \emph{foot} of $I$. 
\item The \emph{lamps} of $L$ are its boundary lamps and its internal lamps. Clearly, for every lamp $I$ of $L$, 
\begin{equation}
\foot I =\bigwedge\set{\foot {\intv n}: \intv n\text{ is a neon tube of } I}. 
\label{eqdRsrFsPd}
\end{equation}
\end{enumeratei}
\end{definition}

Since a slim rectangular lattice $L$ has only two doubly irreducible elements, $\cornl L$ and $\cornr L$, the non-containment in \eqref{eqdkHtFgWgwLNm} is equivalent to the condition that ``$\ideal p_i$ does not contain a doubly irreducible element''. Therefore, the concept of lamps does not depend on the diagram of $L$. Note that in a reasonable sense, the $\bdia$ diagram of $L$ is unique, and so is its $\cdia$ diagram; see Cz\'edli~\cite{czgrectectdiag}. To help the reader in finding the lamps in our diagrams, let us agree to the following.

\begin{convention}\label{convsRhRtskTskk}
In our diagrams  of slim rectangular lattices, the foots of lamps are exactly the black-filled elements. (Except possibly for Figure~\ref{figfbmr}, which can be but need not be the whole lattice in question.) The thick edges  are always neon tubes (but there can be neon tubes that are not thick edges).
\end{convention}

Note that in (the slim rectangular lattices of) Figures~\ref{figsn}, \ref{figlot}, \ref{figleri}, and \ref{figabcd}, the neon tubes are exactly the thick edges. 
In addition to the fact that neon tubes are easy to recognize as edges with bottom elements in $\Mir L$, there is another way to recognize them even more easily; the following remark  follows from definitions.

\begin{remark}\label{remarksZsSTmVbQxPT}
 Neon tubes in a $\bdia$-diagram of a slim rectangular lattice (and so in our figures)  are exactly the precipitous edges and the edges on the upper boundary $\filter \cornl L\cup\filter \cornr L$.
\end{remark}

\emph{Regions} of a slim rectangular lattice $L$ are defined as closed planar polygons surrounded by some edges of (the fixed diagram) of $L$; see Kelly and Rival~\cite{kellyrival} for an elaborate treatise of these regions, or see Cz\'edli and Gr\"atzer~\cite{czgggltsta}. Note that every interval of a planar lattice determines a region; possibly of area 0 if the interval is a chain.
The affine plane on which diagrams are drawn is often identified with $\mathbb R^2$ via the classical coordinatization. 
By the \emph{full geometric rectangle} of a slim rectangular lattice $L$ with a fixed diagram we mean the closed geometric  rectangle whose boundary is the union of all edges belonging to $\leftb L\cup\rightb L$. It is a rectangle indeed since we allow $\bdia$ diagrams only. 
Smaller geometric rectangles are also relevant; this is why we have the second part of the following definition.  

\begin{definition}[geometric shapes associated with lamps]\label{defgshscdzrkrrl}
 Keeping Convention~\ref{convxpllzT} in mind, 
let $I=[p,q]=[\foot I,\Peak I]$ be a lamp of $L$. 
\begin{enumeratei}
\item\label{defgshscdzrkrrla} The \emph{body} of $I$, denoted by $\body I=\body{[p,q]}$ is the region determined by $[p,q]$. Note that $\body I$ is a line segment if $I$ has only one neon tube, and (by Remark~\ref{remarksZsSTmVbQxPT}) it is a quadrangle of positive area having two precipitous upper edges and two lower edges of normal slopes otherwise. 
\item\label{defgshscdzrkrrlb} Assume that $I$ is an internal lamp, and define $r$ as the meet of all lower covers of $q$. Then the interval $[r,q]$ is a region; this region is denoted by $\cirrec I=\cirrec{[p,q]}$ and it is called the \emph{circumscribed rectangle} of $I$.
\end{enumeratei} 
\end{definition}

For example, if $I$ is the only internal lamp of $\Sn n$, then $\cirrec I$ is the full geometric rectangle of $\Sn n$ for all $n\in \Nplu$ while $\body I$ is the dark-grey area for $n\in\set{2,3,4}$ in Figure~\ref{figsn}. To see another example, if $E$ is the lamp with two neon tubes labelled by $e$ on the left of  Figure~\ref{figlot}, then $\body I$ is the dark-grey area and the vertices of $\cirrec I$ are $x$, $y$, $z$, and $t$.
Note that by the dual of Cz\'edli~\cite[Proposition 3.13]{czgcircles}, 
\begin{equation}
\parbox{9cm}{$r$ in Definition~\ref{defgshscdzrkrrl}\eqref{defgshscdzrkrrlb} can also be defined as the meet of the leftmost lower cover and the rightmost lower cover of $q$.}
\label{eqpbxszhvgLtgTsnK}
\end{equation}

\begin{figure}[htb]
\centerline
{\includegraphics[width=\textwidth]{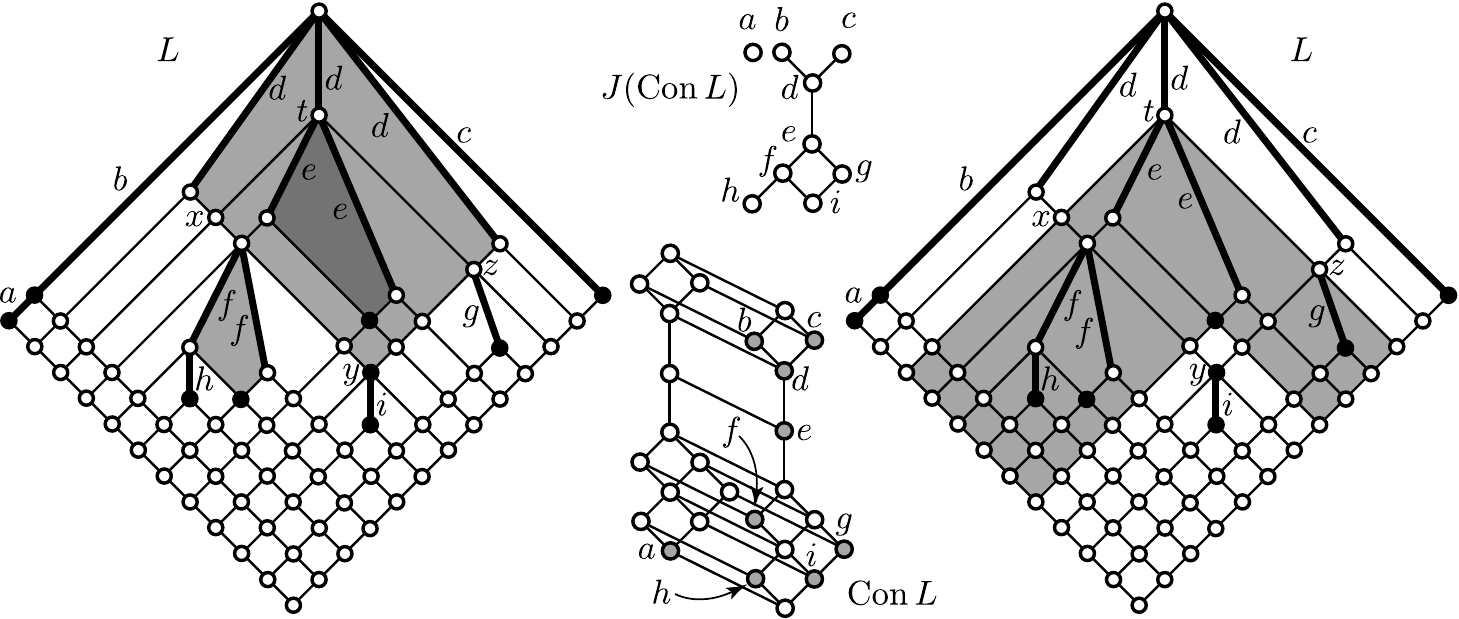}}
\caption{$\Lamps L\cong\Jir{\Con L}$, whence $\Lamps L$ determines $\Con L$}\label{figlot}
\end{figure}

\begin{definition}[line segments associated with lamps]\label{defpPrZwsklsmnszk}
Let $I:=[p,q]$ be a lamp of a slim rectangular lattice $L$ with a fixed $\bdia$-diagram, and let $F$ stand for the full geometric rectangle of $L$. Let $\tuple{p_x,p_y}\in\mathbb R^2$ and $\tuple{q_x,q_y}\in\mathbb R^2$ be the geometric points corresponding to $p=\foot I$ and $q=\Peak I$. As usual, $\Rplu$ will stand for the set of non-negative real numbers.  
We define the following four (geometric) line segments of normal slopes; see Figure~\ref{figslns} where these line segments are  dashed edges of normal slopes.
\allowdisplaybreaks{
\begin{align*}
\LRoof I&:=\set{\pair\xi\eta\in F: (\exists t\in\Rplu)\,( \xi =q_x-t\text{ and }\eta=q_y-t  )},\cr
\RRoof I&:=\set{\pair\xi\eta\in F: (\exists t\in\Rplu)\,( \xi =q_x+t\text{ and }\eta=q_y-t  )},\cr
\LFloor I&:=\set{\pair\xi\eta\in F: (\exists t\in\Rplu)\,( \xi =p_x-t\text{ and }\eta=p_y-t  )},\cr
\RFloor I&:=\set{\pair\xi\eta\in F: (\exists t\in\Rplu)\,( \xi =p_x+t\text{ and }\eta=p_y-t  )}.
\end{align*}}%
These line segments are called the \emph{left roof}, the \emph{right roof}, the \emph{left floor}, and the \emph{right floor} of $I$, respectively.  
Note that $\LRoof I$ and $\LFloor I$ lie on the same geometric line if and only if $I$ is a boundary lamp on the left boundary, and analogously for $\RRoof I$ and $\RFloor I$. We defined the \emph{roof of $I$} and the \emph{floor of $I$} as follows:
\begin{align*}
\Roof I &:=\LRoof I\cup \RRoof I,\text{ and}\cr
\Floor I&:=\LFloor I\cup \RFloor I;
\end{align*}
they are \ashape-shaped broken lines (that is, chevrons pointing upwards).
\end{definition}

\begin{figure}[htb]
\centerline
{\includegraphics[scale=0.9]{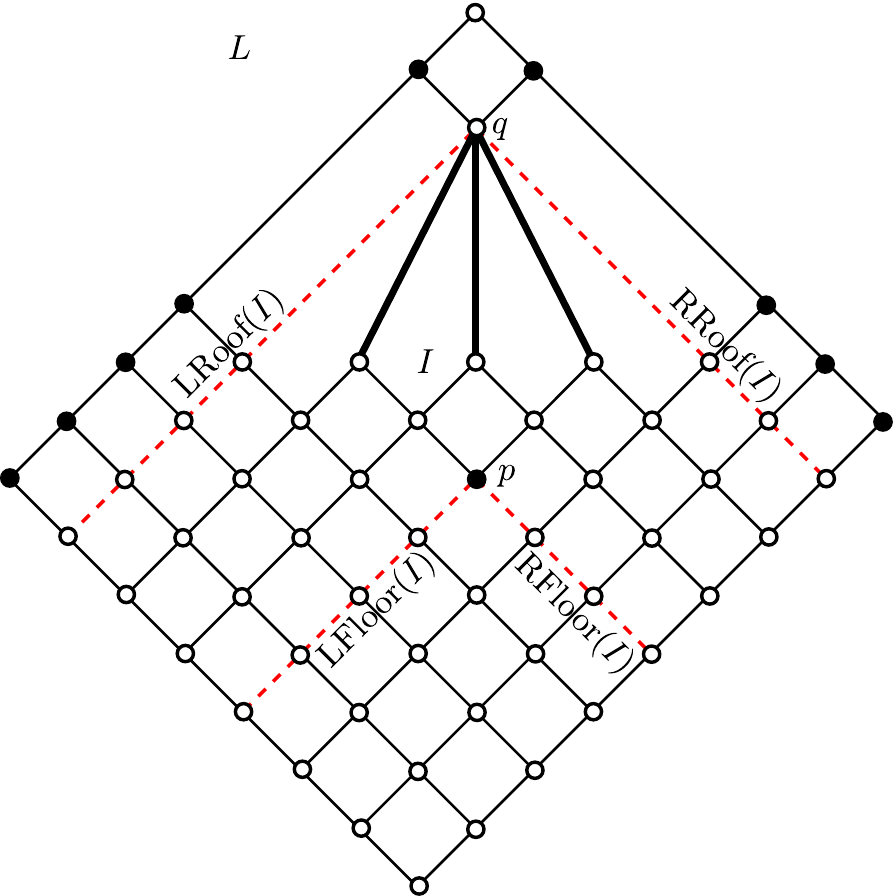}}
\caption{Four line segments associated with $I$}\label{figslns}
\end{figure}

In real life, neon tubes and lamps are for \emph{illuminating} in the sense of emitting light beams. Our lamps do this only downwards with normal slopes; the photons they emit can only go to the directions $\pair 1{-1}$ and $\pair{-1}{-1}$. Definition~\ref{defenlstS} below describes this more precisely. Note at this point that in addition to brightening with light, ``illuminating'' also means intellectual enlightening, that is, making things clear for human mind. This explains that lamps and neon tubes occur in our terminology just introduced: by \emph{illuminating} a part of a $\bdia$-diagram in a visual geometrical way like in physics, lamps 
also \emph{illuminate} the congruence structure by enlightening it in intellectual sense.  Convention~\ref{convxpllzT} is still in effect. 

\begin{definition}[Illuminated sets]\label{defenlstS}
Let $I:=[\foot I,q]=[\foot I,\Peak I]$ be a lamp of $L$.
A geometric point $\pair x y$ of the full geometric rectangle of $L$ is \emph{illuminated by $I$ from the left} 
if the lamp has a neon tube $[p_i, q]$ such that the edge $[p_i,q]$ as a geometric line segment 
has a nonempty intersection with the half-line $\set{\tuple{x-t, y+t}: 0\leq t\in \mathbb R}$. 
Similarly, a point $\pair x y$ of the full geometric rectangle of $L$ is \emph{illuminated by $I$ from the right} if
the half-line $\set{\tuple{x+t, y+t}: 0\leq t\in \mathbb R}$ has a nonempty intersection with at least one of the neon tubes of $I$.
If $\pair x y$ is illuminated from the left or from the right, then we simply say that this point is \emph{illuminated by the lamp $I$}. The set of points illuminated by the lamp $I$, that of points illuminated by $I$ from the right, and that from the left are denoted by 
\begin{equation}\left.
\begin{aligned}
\Enl I&=\Enl{[\foot I ,\Peak I]},\cr 
\LEnl I&=\LEnl{[\foot I ,\Peak I]},\text{ and} \cr
\REnl I&=\REnl{[\foot I ,\Peak I]}
\end{aligned}\,\,\right\}
\label{eqLngjnLnGjsT}
\end{equation}
respectively. The acronym ``Lit'' and its variants come from ``light'' rather than from ``illuminate''.  (The outlook of ``Ill'' coming from ``illuminate'' would not be satisfactory and would heavily depend on the font used.)
Let us emphasize that, say, $\LEnl I$ consist of points illuminated from the \emph{right}; the notation is explained by the fact that the geometric points of $\LEnl I$ are on the left of (and down from) $I$. 
Note that $\LEnl I$ is of positive geometric area if and only if $I$ is not a boundary lamp on the left boundary, and analogously for $\REnl I$. 
Finally, we also define
\begin{equation}
\pEnl I := \Enl I\setminus\Floor I.
\end{equation}
Note that $\Enl I$ is $\LEnl I\cup \REnl I$. By Definition~\ref{defpPrZwsklsmnszk} and \ref{defenlstS},
\begin{equation}
\parbox{8.7cm}{$\Enl I$ is (topologically) bordered by $\Roof I$, $\Floor I$, and appropriate line segments of $\leftb L$ and $\rightb L$, and so it is bordered by line segments of normal slopes.}
\label{eqpbxnfhRmsrlmrtlTslVz}
\end{equation}  
Note also that the intersection $\LEnl I\cap \REnl I$ can be of positive (geometric) area in the plane and that both $\LEnl I$ and $\REnl I$ are of positive area if and only if $I$ is an internal lamp.
\end{definition}

\begin{figure}[htb]
\centerline
{\includegraphics[width=\textwidth]{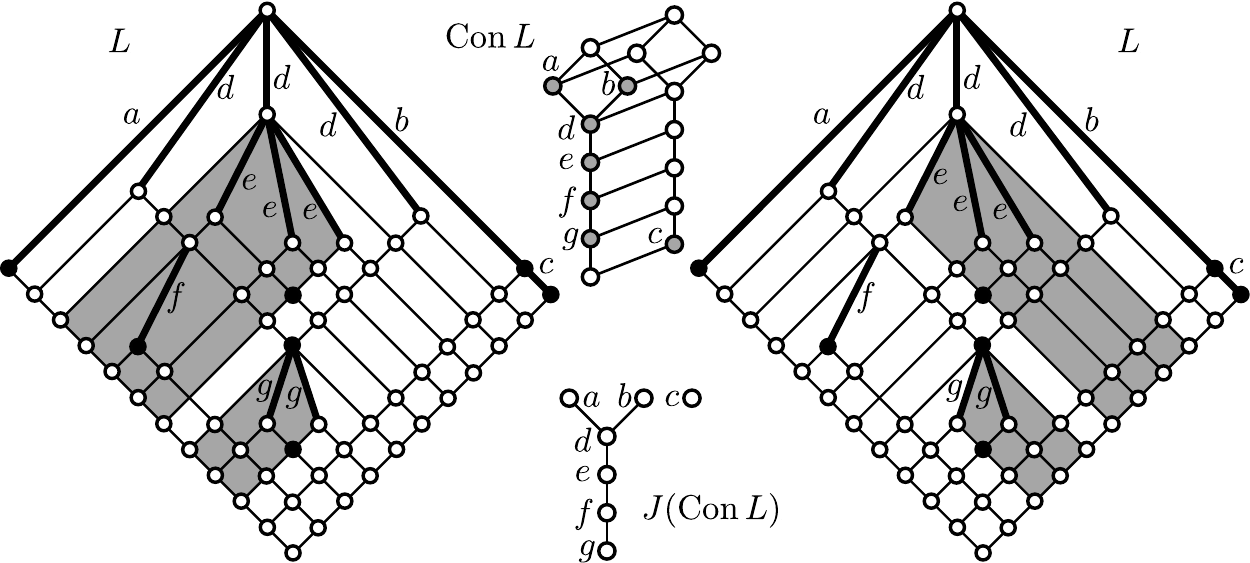}}
\caption{Illustrating \eqref{eqLngjnLnGjsT} and $\Lamps L\cong\Jir{\Con L}$}\label{figleri}
\end{figure}

For example, each of $\Sn 1$, $\Sn 2$, $\Sn 3$, and $\Sn 4$ of Figure~\ref{figsn} has a unique internal lamp, namely, the interval spanned by the  black-filled enlarged element in the middle and $1$. The illuminated set of this lamp is the ``\ashape-shaped'' grey-filled hexagon (containing light-grey and  dark-grey points).
Also, $\Sn n$ has exactly two boundary lamps and the illuminated set of each of these two lamps is the whole geometric rectangle of $\Sn n$, for every $n\in\Nplu$. If $E$ denotes the lamp with two $e$-labelled neon tubes on the right of Figure~\ref{figlot}, then $\Enl I$ is the ``\ashape-shaped'' grey-filled hexagon. In Figure~\ref{figleri}, let $E$ and $G$ denote the lamps consisting of the $e$-labelled edges and  the $g$-labelled edges, respectively. On the left of this figure, $\LEnl E$ and $\LEnl G$ are the grey-filled trapezoids while the grey-filled trapezoids on the right are $\REnl E$ and $\REnl G$. 
Let us note that, for any slim rectangular lattice $L$,  
\begin{equation}
\text{two distinct internal lamps can never have the same peak,}
\label{eqtxtxzGrhcmnZnrGmm}
\end{equation}
but all the three lamps, two boundary and one internal, have the same peak in $\Sn n$. 

Next, we introduce some relations on the set of lamps. Even though not all of these relations are applied in the subsequent sections, they and Lemma~\ref{lemmamain} will hopefully be useful in future investigation of the congruence lattices of slim planar semimodular lattices; for example, in 
Cz\'edli and Gr\"atzer~\cite{czgginprepar}.

\begin{definition}[Relations defined for lamps]\label{defrhRhs} 
Let $L$ be a slim rectangular lattice with a fixed $\bdia$-diagram. The set of lamps of $L$ will be denoted by $\Lamps L$.
On this set, we define eight irreflexive binary relations; seven in geometric ways based on Definitions~\ref{defgshscdzrkrrl}--\ref{defenlstS} and one in a purely algebraic way; these relations will soon be shown to be the same. For $I,J\in \Lamps L$,
\begin{enumeratei}
\item\label{defrhRhsa} 
let $\pair I J\in\rhgeomb$ mean that $I\neq J$,  $\body I\subseteq \Enl J$, and $I$ is an internal lamp;
\item\label{defrhRhsb} 
let $\pair I J\in\rhgeomc$ mean that $I$ is an internal lamp, $\cirrec I\subseteq \Enl J$, and $I\neq J$; 
\item\label{defrhRhsc} 
let $\pair I J\in\rhalg$ mean that  $\Peak I \leq \Peak J$  but $\foot I\not\leq \foot J$;  
\item\label{defrhRhsd} 
let $\pair I J\in\lrhgeomb$ mean that $I$ is an internal lamp, $I\neq J$, and 
\[\text{$\body I\subseteq \LEnl J\,$ or $\,\body I\subseteq \REnl J$;}
\]
\item\label{defrhRhse} 
let $\pair I J\in\lrhgeomc$  mean that $I$ is an internal lamp,  $I\neq J$, and 
\[\text{$\cirrec I\subseteq \LEnl J\,$ or $\,\cirrec I\subseteq \REnl J$;}
\]
\item\label{defrhRhsf} 
let $\pair I J\in\rhfoot$  mean that  $I\neq J$, $\foot I\in\Enl J$, and $I$ is an internal lamp;
\item\label{defrhRhsg} 
let $\pair I J\in\rhinfoot$  mean that  $I\neq J$ and $\foot I$ is in the geometric (or, in other words, topological) interior of $\Enl J$; and, finally,
\item\label{defrhRhsh}  let $\pair I J\in\rhinpfoot$  mean that  $\foot I\in\pEnl J$.
\end{enumeratei}
\end{definition}

\begin{remark}\label{remarkZhsmWgshp} In each of \eqref{defrhRhsa},\ \eqref{defrhRhsb},\dots,\eqref{defrhRhsh} of Definition~\ref{defrhRhs}, $I$ is an internal lamp and $I\neq J$;  this follows easily from other stipulations even where this is not explicitly mentioned.
\end{remark}

Now we are in the position to formulate the main result of this section. The congruence generated by a pair $\pair x y$ of elements will be denoted by $\con(x,y)$. If $\inp=[x,y]$ is an interval, then we can write $\con(\inp)$ instead of $\con(x,y)$.

\begin{lemma}[Main Lemma]\label{lemmamain} 
If $L$ is a slim rectangular lattice with a fixed $\bdia$-diagram, then the following four assertions hold.
\begin{enumeratei}
\item\label{lemmamaina}  The relations $\rhgeomb$, $\rhgeomc$,  $\rhalg$, $\lrhgeomb$, $\lrhgeomc$, $\rhfoot$,  $\rhinfoot$, and  $\rhinpfoot$  are all the same.
\item\label{lemmamainb}  The reflexive transitive closure of $\rhalg$  is a partial ordering of the set $\Lamps L$ of all lamps of $L$; we denote this reflexive transitive closure by $\leq$. 
\item\label{lemmamainc}  The poset $\tuple{\Lamps L;\leq}$ is isomorphic to the poset $\tuple{\Jir {\Con L};\leq}$ of nonzero join-irreducible congruences of $L$ with respect to the ordering inherited from $\Con L$, and the map
$\phi\colon \Lamps L\to \Jir{\Con L}$, defined by $[p,q]\mapsto \con{(p, q)}$, is an order isomorphism.
\item\label{lemmamaind} If $I\prec J$ (that is, $J$ covers $I$) in $\Lamps L$, then $\pair I J\in\rhalg$. 
\end{enumeratei}
\end{lemma}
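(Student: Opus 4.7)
The plan is to prove the four assertions in order, with most of the geometric work concentrated in (i).

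For (i), I would close a cycle of implications among the eight relations. Many are essentially free from the definitions together with \eqref{eqpbxszhvgLtgTsnK} and the basic containments $\cirrec I \supseteq \body I \ni \foot I$, $\pEnl J \subseteq \Enl J$, and $\Enl J = \LEnl J \cup \REnl J$: these give cheaply $\lrhgeomc \Rightarrow \rhgeomc \Rightarrow \rhgeomb \Rightarrow \rhfoot$, $\lrhgeomc \Rightarrow \lrhgeomb \Rightarrow \rhgeomb$, $\rhinpfoot \Rightarrow \rhfoot$, and $\rhinfoot \Rightarrow \rhfoot$. (The ``internal lamp'' clause, where needed but not explicitly stipulated, is automatic by \eqref{eqpbxnfhRmsrlmrtlTslVz}: a boundary lamp has its foot on $\leftb L \cup \rightb L$, hence on some floor, hence outside the pruned or interior variants of $\Enl J$.) The two substantive steps are $\rhfoot \Rightarrow \rhalg$ and $\rhalg \Rightarrow \lrhgeomc$. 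For the first I would use Remark~\ref{remarksZsSTmVbQxPT} together with \eqref{eqpbxnfhRmsrlmrtlTslVz}: the \ashape-shaped border of $\Enl J$ by segments of normal slopes, combined with the precipitous neon tubes of $I$ emanating upwards from $\foot I$, forces $\Peak I \leq \Peak J$, while $\foot I \not\leq \foot J$ holds because a lattice point below $\foot J$ would sit beneath the floor of $J$. The second step is where I expect the main obstacle: one must rule out that $\cirrec I$ straddles the ridge $\Roof J$; the key is that $\cirrec I$ is a parallelogram bounded by two precipitous upper sides (the leftmost and rightmost neon tubes of $I$) and two normal-slope lower sides, so its placement relative to the normal-slope boundary of $\Enl J$ is determined by $\Peak I$ and $\foot I$ and can be read off directly from the algebraic inequalities. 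Reverse implications $\rhalg \Rightarrow \rhinpfoot, \rhinfoot$ then close the cycle using that $\foot I \not\leq \foot J$ rules out $\foot I \in \Floor J$, plus a short argument excluding $\foot I \in \Roof J \cup \leftb L \cup \rightb L$.

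For (ii), once (i) supplies the algebraic form $\rhalg$, reflexivity and transitivity come by construction; antisymmetry follows because any two-step cycle $\pair I J, \pair J I$ in the transitive closure gives $\Peak I = \Peak J$, whence \eqref{eqtxtxzGrhcmnZnrGmm} together with a short case analysis for boundary lamps (which are completely determined by their peak and their side of the boundary) force $I = J$, contradicting irreflexivity of $\rhalg$.

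For (iii), I would invoke the Swing Lemma cited in the introduction to identify $\Jir{\Con L}$ with the set of swing-equivalence classes of prime intervals of $L$. Each lamp $I$ has all its neon tubes prime-projective to one another (semimodularity together with \eqref{eqdRsrFsPd} brings them together via up-perspectivities through $\Peak I$), so they lie in a single swing-class and $\phi(I) = \con(\foot I, \Peak I) \in \Jir{\Con L}$ is well defined. The key remaining point is that every prime interval is swing-equivalent to a neon tube of some lamp, and distinct lamps yield distinct swing-classes: the first uses that a prime interval can always be swung until its lower endpoint is meet-irreducible, and the second uses \eqref{eqtxtxzGrhcmnZnrGmm}. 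Order-preservation $\pair I J \in \rhalg \iff \phi(I) \leq \phi(J)$ then follows by reading off $\rhinfoot$ as the existence of a prime-perspectivity chain from a neon tube of $I$ to one of $J$. Finally, (iv) is immediate: a cover $I \prec J$ in $\Lamps L$ admits no proper intermediate lamp, so any witness of $I < J$ in the reflexive transitive closure of $\rhalg$ reduces to a single $\rhalg$-step.
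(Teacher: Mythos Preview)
Your cycle for part~(i) has a factual error and is missing the key structural ingredient. First, the circumscribed rectangle is not what you describe: by \eqref{eqtxtmpFrRHhW}, \eqref{eqpbxJrNfhgtWcBvmb}, and \eqref{eqpbxwizgRpsrbrWnGskS}, $\cirrec I$ is the geometric region of the distributive $4$-cell at which $I$ was inserted by a multifork extension, so \emph{all four} of its sides are of normal slope and its two upper sides are single edges of $L$; the precipitous neon tubes belong to $\body I$, not to $\cirrec I$. Second, and more seriously, neither of your two ``substantive steps'' can be completed by local geometry alone. From $\foot I\in\Enl J$ you cannot read off $\Peak I\leq\Peak J$ the way you suggest: the neon tubes of $I$ do not emanate from $\foot I$ (they start at the $p_i$'s strictly above it), and nothing prevents $\Peak I$ from protruding above $\Roof J$ without further input. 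The paper organizes the cycle differently---its hard step is $\rhfoot\subseteq\lrhgeomc$---and the proof of that step hinges on Lemma~\ref{lemmamrkXtnDsZs} and the sequence \eqref{eqpbxHzwsrZstnC}: since the upper sides of $\cirrec I$ are edges and edges cannot cross, a planarity analysis shows that if $\cirrec I\not\subseteq\LEnl J$ then $\Peak J\leq\Peak I$; but then $J$ is born \emph{after} $I$ in the multifork sequence, and at the moment $J$ appears no old element (in particular $\foot I$) can lie in the topological interior of $\Enl J$, a contradiction. This birth-order argument is the crux, and your outline never invokes it.

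For (ii) and (iii) you take a different route from the paper. The paper does not argue antisymmetry directly nor use the Swing Lemma; instead it builds an explicit isomorphism $\tuple{\Lamps L;\leq}\cong\tuple{\Trajs L/\bTheta;\btau/\bTheta}$ via \eqref{eqsRszTmrvPrl}--\eqref{eqdzjlnTDseRkRkfkbRbfsn} and then quotes Cz\'edli~\cite{czgtrajcolor} for $\tuple{\Trajs L/\bTheta;\btau/\bTheta}\cong\tuple{\Jir{\Con L};\leq}$; this gives (ii) and (iii) simultaneously. Your Swing-Lemma plan is plausible, but as written the order-preservation step (``reading off $\rhinfoot$ as a prime-perspectivity chain'') is unsubstantiated, and your antisymmetry argument literally treats only a single $\rhalg$-step each way rather than arbitrary chains in the transitive closure (repairable, since $\Peak$ is monotone along $\rhalg$, but not yet said). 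Part~(iv) you have correctly; it is the same one-line observation as in the paper.
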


The proof of this lemma heavily relies upon Cz\'edli~\cite{czgtrajcolor} and \cite{czgrectectdiag}. Before presenting this proof,  we need some preparations. First, we need to recall the \emph{multifork structure theory} from Cz\'edli~\cite{czgtrajcolor}. Minimal regions of a planar lattice are called \emph{cells}. Every  slim planar semimodular  lattice, in particular, every slim rectangular lattice $L$ is a \emph{$4$-cell lattice}, that is, its cells are formed by four edges; see Gr\"atzer and Knapp~\cite{gratzerknapp1}. So the cells of
$L$ are of the from $[a\wedge b, a\vee b]$ such that $a\parallel b$ and  $[a\wedge b, a\vee b]=\set{a\wedge b, a, b,  a\vee b}$. This cell is said to be a \emph{distributive cell} if the principal ideal $\ideal(a\vee b)$ is a distributive lattice. Let $n\in \Nplu$. To obtain the \emph{multifork extension $($of rank $n)$} of $L$ \emph{at a distributive $4$-cell} $J$ means that we change $J$ to a copy of $\Sn n$ and keep adding new elements while going to the southeast and the southwest to preserve semimodularity. 
This is visualized by Figure~\ref{figmfXtns}, where $L$ is drawn on the left, $J$ is the grey-filled 4-cell, $n=3$, and the slim rectangular lattice $L'$ we obtain by the multifork extension of $L$ at $J$ of rank 3 is $L'$, drawn on the right. The new elements, that is, the elements of $L'\setminus L$, are the pentagon-shaped ones. 
(For a more detailed definition of a multifork extension, which we do not need here since Figures~\ref{figmfXtns} and \ref{figmlsrNgrt} are sufficient for our purposes, the reader can resort to Cz\'edli~\cite{czgtrajcolor}. Note that \cite{czgtrajcolor} uses the term ``$n$-fold''  rather than ``of rank $n$''.)

\begin{figure}[htb]
\centerline
{\includegraphics[width=\textwidth]{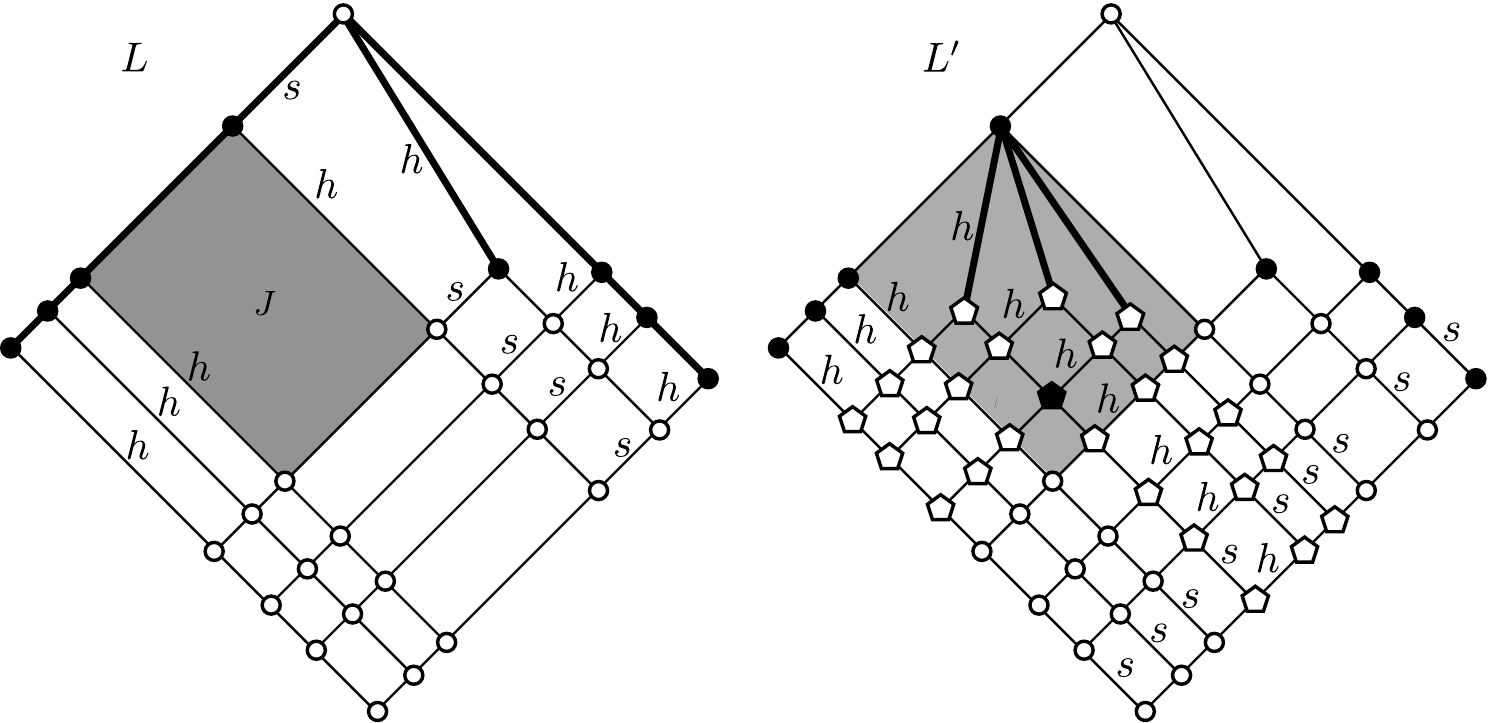}}
\caption{$L'$ is the multifork extension of rank 3 at the 4-cell $J$}\label{figmfXtns}
\end{figure}

By a \emph{grid} we mean the direct product of two finite nonsingleton chains or a $\bdia$-diagram of such a direct product. Note that a slim rectangular lattice is distributive if and only if it is a grid. 
Note also that, in a slim rectangular lattice with a fixed $\bdia$-diagram, 
\begin{equation}
\parbox{7.3cm}{a $4$-cell $I=[p,q]$ is distributive if and only if every edge in the ideal $\ideal q$ is of a normal slope;}
\label{eqpbxJrNfhgtWcBvmb}
\end{equation}
the \idez{only if} part of \eqref{eqpbxJrNfhgtWcBvmb} is Corollary 6.5 of Cz\'edli~\cite{czgrectectdiag} while the \idez{if} part follows easily by using that if all  edges of $\ideal q$ are of normal slopes then $\ideal q$ is a (sublattice of a) grid.

\begin{lemma}[Theorem 3.7 of Cz\'edli~\cite{czgtrajcolor}]\label{lemmamrkXtnDsZs}
Each slim rectangular lattice is obtained from a grid
by a sequence of multi-fork extensions at distributive $4$-cells, and every lattice obtained in this way is a slim rectangular lattice.
\end{lemma}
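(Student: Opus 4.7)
The plan is to establish the two assertions separately, starting with the easier forward direction, then tackling the backward (reconstruction) direction by induction on $|L|$.

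For the forward direction---every lattice obtained from a grid by a sequence of multi-fork extensions at distributive $4$-cells is a slim rectangular lattice---I would induct on the number of such extensions. The base case is a grid, which is trivially a (distributive) slim rectangular lattice. For the inductive step, suppose $L'$ is obtained from a slim rectangular lattice $L$ by a multi-fork extension of rank $n$ at a distributive $4$-cell $J=[a\wedge b, a\vee b]$. I would verify each defining property in turn: planarity is visibly preserved because the construction inserts a copy of $\Sn n$ in the region occupied by $J$ and propagates new elements southeast and southwest within the ideal $\ideal(a\vee b)$, which is a grid by \eqref{eqpbxJrNfhgtWcBvmb}; semimodularity is a local check at each new cover, using that $\Sn n$ itself is semimodular and the glueing preserves this; slimness follows because no three-atom modular sublattice can arise inside the affected (distributive) ideal; and rectangularity is preserved because the doubly irreducible elements $\cornl L$ and $\cornr L$ lie on the outer boundary and are unaffected by the modifications inside $J$, so $\cornl{L'}=\cornl L$ and $\cornr{L'}=\cornr L$ remain the unique doubly irreducible boundary elements.

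For the reverse direction I would induct on $|L|$. If every $4$-cell of $L$ is distributive, then by \eqref{eqpbxJrNfhgtWcBvmb} every edge of $L$ is of normal slope; a straightforward parity-type argument on the $\bdia$-diagram then shows that $L$ itself is a grid, completing the base case. Otherwise, there exists at least one non-distributive $4$-cell, equivalently, an $\Sn n$-pattern embedded in $L$. In a $\bdia$-diagram, such patterns are precisely signaled by precipitous edges via Remark~\ref{remarksZsSTmVbQxPT}; I would pick an \emph{extremal} (topmost, in a suitable partial order) maximal cluster of precipitous edges forming a copy of $\Sn n$ and define $L'$ to be the lattice obtained by contracting this $\Sn n$ back to a single $4$-cell $J$. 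One then checks that $L'$ is still a slim rectangular lattice (with $\bdia$-diagram obtained by the obvious geometric contraction) and that $L$ is the rank-$n$ multi-fork extension of $L'$ at $J$; since $|L'|<|L|$, the induction hypothesis applies to $L'$.

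The main obstacle will be the inductive step of the reverse direction, specifically in simultaneously ensuring that the contracted $4$-cell $J$ in $L'$ is \emph{distributive} and that $L'$ itself is slim rectangular. Distributivity of $J$ in $L'$ amounts, by \eqref{eqpbxJrNfhgtWcBvmb}, to showing that after contraction every edge of the principal ideal $\ideal_{L'}(\Peak J)$ becomes of normal slope; this forces the extremality of the chosen $\Sn n$-cluster to be carefully formulated so that no precipitous edge of $L$ survives in the image of that ideal. Preservation of semimodularity, slimness, and rectangularity under contraction is then a matter of local checks analogous to those in the forward direction. The lamp apparatus developed later in the paper (in particular a maximal element of $\Lamps L$ with respect to the order built from $\rhalg$) provides a principled choice of the extremal $\Sn n$-pattern, which is precisely what makes the induction go through.
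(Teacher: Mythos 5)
Note first that the paper does not prove this lemma at all: it imports it verbatim as Theorem 3.7 of Cz\'edli~\cite{czgtrajcolor}, where the proof goes through the earlier Cz\'edli--Schmidt single-fork extension machinery (forks with a common top are grouped into one multifork), not through a contraction induction. So your argument would be a genuinely new route; your forward direction is routine and essentially fine as a sketch. The reverse direction, however, has two concrete gaps.

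First, the inverse of a multifork extension is \emph{not} the local contraction of the copy of $\Sn n$ sitting in the circumscribed rectangle. As Figure~\ref{figmfXtns} shows, the extension also inserts new elements along two staircases running southeast and southwest from the $4$-cell all the way down to the lower boundary (the pentagon-shaped elements); all of these must be deleted as well. Merely collapsing the ``cluster of precipitous edges forming a copy of $\Sn n$'' leaves the tine elements behind, and the resulting structure is not the multifork preimage: the cells below the contracted region are no longer $4$-cells, so $L'$ as you define it is not even a candidate for a slim rectangular lattice, let alone one with $L$ as its rank-$n$ extension. Second, your ``principled extremal choice'' is exactly backwards. You propose a \emph{maximal} element of $\Lamps L$, but by Lemma~\ref{lemmaXcxbndry} the maximal lamps are the \emph{boundary} lamps, which carry no multifork to remove at all. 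What removability actually requires is an internal lamp $I$ such that the only precipitous edges in $\ideal{\Peak I}$ are the neon tubes of $I$ itself, i.e., no other internal lamp has its peak below $\Peak I$; by \eqref{eqpbxJrNfhgtWcBvmb} and Remark~\ref{remarksZsSTmVbQxPT}, it is precisely this peak-\emph{minimality} among internal lamps that makes the restored $4$-cell distributive in $L'$, which in turn is what lets the induction hypothesis apply. (One must also verify such a peak-minimal internal lamp exists whenever $L$ is not a grid, which is easy, but your ``topmost'' choice would select a lamp below whose peak other precipitous edges typically survive, so the contracted cell would fail distributivity and the step would collapse.) With the full tine removal and the corrected extremality condition the induction is workable, but as written the reverse step fails.
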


Figure~\ref{figmlsrNgrt} illustrates how we can obtain the lattice $L$ defined by Figure~\ref{figlot} from the initial grid $L_0$ in six steps. For $i=1,2,\dots 6$, $L_i$ is obtained from $L_{i-1}$ by  a multifork extension at the grey-filled 4-cell of $L_{i-1}$. 
We still need one important concept, which we recall from Cz\'edli~\cite{czgtrajcolor}; it was originally introduced in  Cz\'edli and Schmidt~\cite{czgschtJH}.

\begin{figure}[htb]
\centerline
{\includegraphics[width=\textwidth]{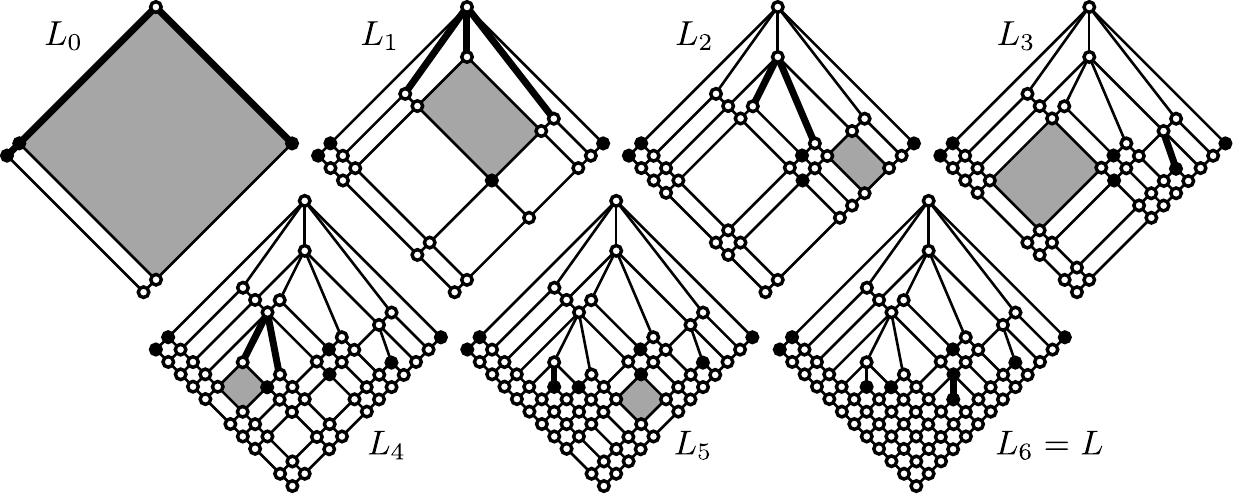}} 
\caption{Illustrating Lemma~\ref{lemmamrkXtnDsZs}: a sequence of multifork extensions}\label{figmlsrNgrt}
\end{figure}

\begin{definition}[Trajectories]\label{deftrajectory}
Let $L$ be a slim rectangular lattice with a fixed $\bdia$-diagram. The set of its \emph{edges}, that is, the set of its prime intervals is denoted by $\Edges L$. We say that $\inp,\inq\in\Edges L$  are \emph{consecutive edges} if they are opposite sides of a 4-cell. Maximal sequences of consecutive edges are called \emph{trajectories}. 
In other words, the blocks of the least equivalence relation on $\Edges L$ including the  consecutiveness relation are called trajectories. 
If a trajectory has an edge on the upper boundary (equivalently, if it has an edge that is the unique neon tube of a boundary lamp), then this trajectory a \emph{straight trajectory}. Otherwise, it is a \emph{hat trajectory}. Each trajectory has a unique edge that is a neon tube; this edge is called the \emph{top edge} of the trajectory. The top edge of a trajectory $u$ will be denoted by $\topedge u$ while $\Trajs L$ will stand for the set of trajectories of $L$.
\end{definition}

For example, if $L$ is the lattice on the left of Figure~\ref{figmfXtns}, then it has eight trajectories. One of the eight trajectories is a hat trajectory and consists of the $h$-labelled edges. There are seven straight trajectories and one of these seven consists of the $s$-labelled edges. 
Note that all neon tubes of $L$ are drawn by thick lines. On the right of the same figure, only the neon tubes of the new lamp are thick. Also, $L'$ has exactly four hat trajectories and seven straight trajectories. One of the hat trajectories consists of the $h$-labelled edges while the $s$-labelled edges form a straight trajectory.

\begin{proof}[Proof of Lemma \ref{lemmamain}]
Let $L$ be a slim rectangular lattice with a fixed $\bdia$-diagram.
To prove part \eqref{lemmamaina}, we need some preparations. We know from Lemma~\ref{lemmamrkXtnDsZs} that $L$ is obtained  by a 
\begin{equation}
\parbox{7.5cm}{sequence $L_0, L_1,\dots, L_k=L$ such that $L_0$ is a grid and $L_i$ is a  multifork extension of $L_{i-1}$ at a distributive 4-cell $H_i$ of $L_{i-1}$ for $i\in\set{1,\dots,k}$.}
\label{eqpbxHzwsrZstnC}
\end{equation}
This is illustrated by Figure~\ref{figmlsrNgrt}.
In $\Lamps{L_0}$, there are only boundary lamps. 
It is clear by definitions that each internal lamp arises from the replacement of the distributive 4-cell $H_i$ of $L_{i-1}$, grey-filled in Figures~\ref{figmfXtns} and \ref{figmlsrNgrt}, by a copy of $\Sn{n_i}$, for some $i\in\set{1,\dots, k}$ and $n_i\in\Nplu$. Shortly saying,
\begin{equation}
\parbox{7.7cm}{every internal lamp comes to existence from a multifork extension. Furthermore, if a lamp $K$ comes by a multifork extension at a 4-cell $H_i$, then $\cirrec I$ is the geometric region determined by $H_i$;}
\label{eqtxtmpFrRHhW}
\end{equation}
the second half of \eqref{eqtxtmpFrRHhW} follows from \eqref{eqpbxJrNfhgtWcBvmb} and Convention~\ref{convxpllzT}.
For example, on the right of Figure~\ref{figmfXtns}, the new lamp what the multifork extension has just brought  is the 
lamp with pentagon-shaped black-filled foot 
 and the thick edges are its neon tubes. Similarly, the
new lamp is the one with thick neon tube(s) in each of $L_1$, \dots, $L_6=L$ in Figure~\ref{figmlsrNgrt}. Keeping Convention~\ref{convxpllzT} in mind, it is clear that if $[p,q]$ is the new lamp that the multifork extension of $L_{i-1}$ brings into $L_i$, then 
$[\cornl{L_i}]\wedge p, p]$ and $[\cornr{L_i}]\wedge p, p]$ are chains with all of their edges being of normal slopes. Observe that
\begin{equation}
\parbox{8.5cm}
{no geometric line segment that consists of some edges of $L_i$ disappears at further multifork extension steps,}
\label{eqpbxZtSrptnK}
\end{equation}
but there can appear more vertices on it. 
This fact, Convention~\ref{convxpllzT}, \eqref{eqpbxJrNfhgtWcBvmb},   \eqref{eqtxtmpFrRHhW}, 
and the fact that $L_{i-1}$ is a sublattice of its multifork extension $L_i$ for all $i\in\set{1,\dots,k}$ yield that 
\begin{equation}
\parbox{9,4cm}{if $I=[p,q]\in \Lamps L$, 
then the lowest point of $\LRoof I$ and that of $\LFloor I$ are 
$\cornl{L}\wedge q$ and $\cornl{L}\wedge p$, respectively, and the intervals $[\cornl{L}\wedge q,q]$ and $[\cornl{L}\wedge p,p]$ are chains. That is, $\LRoof I$ and $\LFloor I$ correspond to intervals that are chains. The edges of these chains are of the same normal slope. Similar statements
hold for \idez{right} instead of \idez{left}.}
\label{eqpbxZhRsjPzrSt}
\end{equation}
For later reference, note another easy consequence of \eqref{eqpbxHzwsrZstnC} and \eqref{eqpbxZtSrptnK}, or \eqref{eqpbxZhRsjPzrSt}:
\begin{equation}
\parbox{9.7cm}{With reference to \eqref{eqpbxHzwsrZstnC}, assume that $i<j\leq k$ and a lamp $I$ is present in $L_i$. Then $\Enl I$ is the same in $L_i$ as in $L_j$ and $L$.}
\label{eqpbxdhzWnhRgbBBsWt}
\end{equation}
For an internal lamp $I$, the leftmost neon tube and the (not necessarily distinct) rightmost neon tube of $I$ are the right upper edge and the left upper edge of a 4-cell, respectively.  
Combining this fact,  \eqref{eqtxtmpFrRHhW}, and Observation 6.8(iii) of Cz\'edli~\cite{czgrectectdiag}, we conclude that 
\begin{equation}
\parbox{8.4cm}{the two upper (geometric) sides of the circumscribed rectangle $\cirrec I$ of an internal lamp $I$ are edges (that is, prime intervals) of $L$.}
\label{eqpbxwizgRpsrbrWnGskS}
\end{equation}
Since $\LEnl J\subseteq \Enl J$ and $\REnl J\subseteq \Enl J$ for every $J\in \Lamps L$, 
\begin{equation}
\lrhgeomc\subseteq \rhgeomc \quad\text{and}\quad \lrhgeomb\subseteq \rhgeomb.  
\label{eqsspzRtmQcpxVcB}
\end{equation}

Next, we are going to prove that 
\begin{equation}
\rhgeomc\subseteq \rhgeomb\quad\text{and}\quad \lrhgeomc\subseteq \lrhgeomb.
\label{eqsizrskljDmNdGmrc}
\end{equation}
To do so, assume that $\pair I J\in\rhgeomc$ and $\pair {I'}{J'} \in\lrhgeomc$. This means that $\cirrec I\subseteq \Enl J$ and 
$\cirrec {I'}\subseteq \LEnl {J'}$ or $\cirrec {I'}\subseteq \REnl {J'}$, respectively.  
The definition of $\Sn n$ and multifork extensions,  \eqref{eqtxtmpFrRHhW}, and \eqref{eqpbxZtSrptnK} yield that $\body I\subseteq \cirrec I$ and $\body{I'}\subseteq \cirrec {I'}$. Combining these inclusions with the earlier ones, 
we have that  $\body I\subseteq \Enl J$ and  $\body {I'}\subseteq \LEnl {J'}$ or  $\body {I'}\subseteq \REnl {J'}$. Hence,  
$\pair I J\in\rhgeomb$ and $\pair {I'} {J'}\in\lrhgeomb$, proving the validity of \eqref{eqsizrskljDmNdGmrc}.

We claim that 
\begin{equation}
\rhgeomb\subseteq \rhalg .
\label{eqprmsdrWsvThnNn}
\end{equation}

To show this, assume that $\pair I J\in\rhgeomb$, that is, $\body I\subseteq \Enl J$, $I\neq J$,  and $I$ is an internal lamp.  We know from 
\eqref{eqpbxnfhRmsrlmrtlTslVz} that $\Enl J$ is bordered 
by geometric line segments of normal slopes.
Hence, Corollary 6.1 of Cz\'edli~\cite{czgrectectdiag} and 
$\body I\subseteq \Enl J$ yield that $\Peak I\leq \Peak J$. Figure~\ref{figsn} and Convention~\ref{convxpllzT} imply that $\foot I$ is not on $\Floor J$,
the lower geometric boundary of $\Enl J$. It is trivial 
by $\body I\subseteq \Enl J$ that $\foot I$ is not (geometrically and strictly) below $\Floor J$.
Hence, the just mentioned Corollary 6.1 of \cite{czgrectectdiag} shows that $\foot I\not\leq \foot J$. We have obtained that 
$\pair I J\in\rhalg$. Thus, $\rhgeomb\subseteq \rhalg$, proving \eqref{eqprmsdrWsvThnNn}.

\begin{figure}[htb]
\centerline
{\includegraphics[scale=0.9]{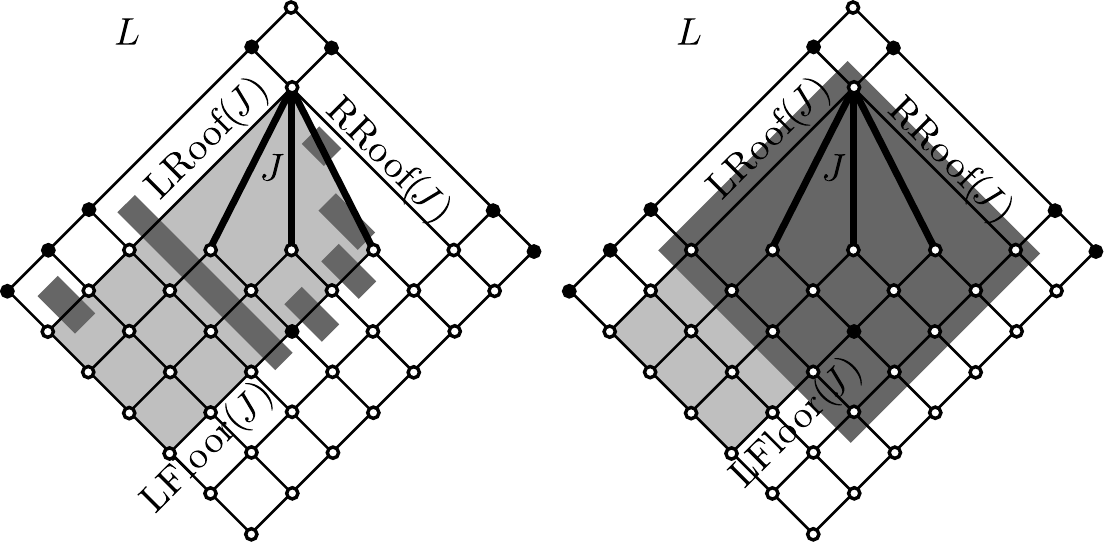}}
\caption{Proving that $ \rhfoot \subseteq \lrhgeomc$}\label{figftnt}
\end{figure}

By construction, see Figure~\ref{figsn}, $\cirrec I$ contains $\foot I$ as a geometric point in its topological interior for every internal lamp $I\in\Lamps L$. This clearly yields the first  inclusion in \eqref{eqdtmLzNvrSbRphhG} below:
\begin{equation}
\rhgeomc \subseteq \rhinfoot \subseteq \rhinpfoot \subseteq \rhfoot \subseteq \lrhgeomc. 
\label{eqdtmLzNvrSbRphhG}
\end{equation}
The second and the third inclusions above are trivial. 
In order to show the fourth inclusion, $\rhfoot\subseteq \lrhgeomc$, assume that
$\pair I J\in\rhfoot$. 
We know that $I\neq J$, $I$ is an internal lamp, and $\foot I\in\Enl J$. Since $\Enl J=\LEnl J\cup\REnl J$, 
left-right symmetry allows us to assume that the geometric point 
$\foot I$ belongs to $\LEnl J$. But, as it is clear from Figure~\ref{figsn} and \eqref{eqtxtmpFrRHhW}, $\foot I$ is in the (topological) interior of $\cirrec I$. Hence, 
\begin{equation}
\text{there is an open set $U\subseteq \mathbb R^2$ such that 
$U\subseteq \cirrec I\cap \LEnl J$.}
\label{eqtxtHvstrnplttmprKrr}
\end{equation}
We claim that $\cirrec I\subseteq \LEnl J$. Suppose, for a contradiction, that this inclusion fails. 
We know from \eqref{eqpbxnfhRmsrlmrtlTslVz}, \eqref{eqtxtmpFrRHhW},  \eqref{eqpbxZtSrptnK}, \eqref{eqpbxdhzWnhRgbBBsWt},  and Remark~\ref{remarksZsSTmVbQxPT} that $\LEnl J$ is surrounded by edges of normal slopes except its top right precipitous side. Also, 
we obtain from \eqref{eqpbxJrNfhgtWcBvmb}, \eqref{eqtxtmpFrRHhW}, and  \eqref{eqpbxwizgRpsrbrWnGskS} that $\cirrec I$ is a rectangle whose sides are of normal slopes, and the upper two sides are edges (that is, prime intervals).  These facts, \eqref{eqtxtHvstrnplttmprKrr}, and $\cirrec I\not\subseteq \LEnl J$ yield that a side of $\cirrec I$ crosses a side of $\LEnl J$. But edges do not cross in a planar diagram, whence no side of $\LEnl J$ crosses an upper edge of $\cirrec I$. So if we visualize $\LEnl J$ by light-grey color and $\cirrec I$ is dark-grey, then none of the six little dark-grey rectangles on the left of Figure~\ref{figftnt} can be $\cirrec I$. Note that, in both parts of this Figure, $J$ is formed by the three thick neon tubes. 
Since the above-mentioned six dark-grey rectangles represent generality, the only possibility is that the top vertex $\Peak I$ of $\cirrec I$ is positioned like the top of the dark-grey rectangle on the right of  Figure~\ref{figftnt}.  (There can be lattice elements not indicated in the diagram, and we took into consideration  that the top sides of $\cirrec J$ are also edges and cannot be crossed and that \eqref{eqtxtxzGrhcmnZnrGmm} holds). But then Corollary 6.1 of Cz\'edli~\cite{czgrectectdiag} yields that $\Peak J\leq \Peak I$.  With reference to \eqref{eqpbxHzwsrZstnC} and  \eqref{eqtxtmpFrRHhW}, let $i$ and $j$ denote the subscripts such that $I$ and $J$ came to existence in $L_i$ and $L_j$, respectively. Since 
none of the lattices $\Sn n$, $n\in\Nplu$,  is distributive, $j>i$. 
So the elements of $L_i$ are old when $L_j$ is constructed as a multifork extension of $L_{j-1}$. Here by an old element of $L_j$ we mean an element of $L_{j-1}$. 
It is clear by the concept of multifork extensions and that of $\LEnl J$,  \eqref{eqpbxJrNfhgtWcBvmb}, and  \eqref{eqtxtmpFrRHhW}  that no old element belongs to the topological interior of $\Enl J$. But 
$\foot I\in L_i\subseteq L_{j-1}$ does belong to this interior, which is a contradiction showing that $\cirrec I\subseteq \LEnl J$. Thus, 
$\pair I J\in \lrhgeomc$, and we obtain that $\rhfoot\subseteq \lrhgeomc$. This completes the proof of \eqref{eqdtmLzNvrSbRphhG}.

Next, we are going show that
\begin{equation}
\rhalg\subseteq \rhfoot.
\label{eqszGrbdlkkFhvN}
\end{equation}
To verify this, assume that $\pair I J\in\rhalg$. We know that $\Peak I\leq \Peak J$ but $\foot I\not\leq \foot J$. Observe that $\foot I\leq \Peak I\leq \Peak J$. 
We know that $\Peak J$ and $\foot J$ are the vertices of the \ashape-shaped $\Roof J$ and $\Floor J$, respectively, and $\Roof J$ and $\Floor J$ consist of line segments of normal slopes. Hence, 
it follows from Corollary 6.1 of Cz\'edli~\cite{czgrectectdiag} and $\foot I\leq \Peak J$ that $\foot I$ is geometrically below or on $\Roof J$. On the other hand, Corollary 6.1 of  \cite{czgrectectdiag} 
and $\foot I\not\leq \foot J$ imply that $\foot I$ is neither geometrically below, nor on $\Floor J$. So $\foot I$ is above $\Floor J$. 
Therefore, $\foot I$ is geometrically between $\Floor J$ and $\Roof J$. Thus, 
 \eqref{eqpbxnfhRmsrlmrtlTslVz} gives that  $\foot I\in\Enl J$. This shows that $\pair I J\in\rhfoot$, and we have shown the validity of \eqref{eqszGrbdlkkFhvN}.

\begin{figure}[htb]
\centerline
{\includegraphics[scale=1.0]{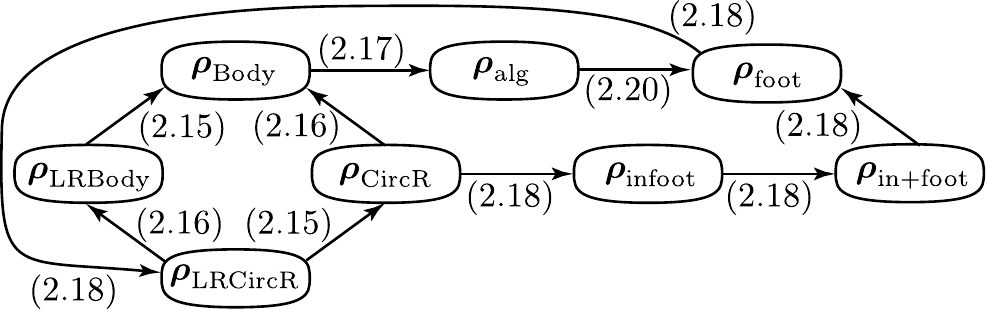}}
\caption{An overview of what has already been proved \label{figschm}}
\end{figure}

The directed graph in Figure~\ref{figschm}  visualizes what we have already shown.  Each (directed) edge  $\rho_1 \to \rho_2$ of the graph
means that  $\rho_1\subseteq \rho_2$ has been formulated in the displayed equation given by the label of the edge.
The directed graph is strongly connected,  implying part \eqref{lemmamaina}.

Next, we turn our attention to parts \eqref{lemmamainb} and \eqref{lemmamainc}.
For a trajectory $u$ of $L$, $\topedge u$ is a neon tube, so it belongs to a unique lamp; we denote this lamp by $\Lmp u$. Lamps are special intervals. Hence, in agreement with how the notation $\con()$ was introduced
 right before Lemma~\ref{lemmamain}, $\con(\Lmp u)$ will denote the congruence  $\con(\foot{\Lmp u} , \Peak{\Lmp u})$ generated by the interval $\Lmp u$. We claim that, for any trajectory $u$ of $L$, 
\begin{equation}
\con(\topedge u)=\con(\Lmp u).
\label{eqdzhTslRjGnw}
\end{equation}
To show \eqref{eqdzhTslRjGnw}, observe that this assertion  is trivial if $\topedge u$ is the only neon tube of $\Lmp u$ since then the same prime interval generates the congruence on both sides of  \eqref{eqdzhTslRjGnw}.

Hence, we can assume that $\Lmp u$ has more than one neon tubes; clearly, then $\Lmp u$ is an internal lamp. Let $p:=\foot{\Lmp u}$, $q:=\Peak{\Lmp u}$, and let $[p_1,q]=\topedge u$, $[p_2,q]$, \dots, $[p_m,q]$ be a list of all neon tubes of $\Lmp u$. With this notation, \eqref{eqdzhTslRjGnw} asserts that $\con(p_1,q)=\con(p,q)$. 
Since the congruence blocks of a finite lattice are intervals  and $p\leq p_1\leq q$, 
the inequality $\con(p_1,q)\leq \con(p,q)$ is clear.  It follows in a straightforward way by inspecting the lattice $\Sn m$, see Figure~\ref{figsn}, or it follows trivially by the Swing Lemma, see 
 Gr\"atzer~\cite{ggswinglemma} or    Cz\'edli, Gr\"atzer and  Lakser~\cite{czggghlswing}, that $\con(p_1,q)=\con(p_2,q)=\dots=\con(p_m,q)$. Hence, $(p_i,q)\in \con (p_1,q)$ for $i\in\set{1,\dots,m}$, and we obtain that
\begin{equation}
(p,q)\overset{\eqref{eqdRsrFsPd}}{=}(p_1\wedge\dots\wedge p_m,q\wedge\dots\wedge q)\in\con(p_1,q).
\end{equation}
This yields the converse inequality $\con(p_1,q)\geq \con(p,q)$ and proves \eqref{eqdzhTslRjGnw}.

As usual,  the smallest element and largest element of an interval $I$ will often be denoted by $0_I$ and $1_I$, respectively.  Following Cz\'edli~\cite[Definition 4.3]{czgtrajcolor},
we define two relations  on $\Trajs L$. 
For trajectories $u,v\in \Trajs L$, we let 
\begin{equation}
\pair u v\in \bsigma\defiff \left\{\,\,
\parbox{6.5cm}{$1_{\topedge u}\leq 1_{\topedge v}$,   $0_{\topedge u}\not\leq 0_{\topedge v}$, and $u$ is a hat trajectory.}\right.
\label{eqzksrCsdrWksv}
\end{equation}
The  second relation is $\btau$, the reflexive transitive closure of $\bsigma$. We also need a third relation on $\Trajs L$; it is $\bTheta:=\btau\cap\btau^{-1}$. The $\bTheta$-block of a trajectory $u\in\Trajs L$ will be denoted by $u/\bTheta$. Since $\btau$ is a \emph{quasiordering}, that is, a reflexive and transitive relation, it is well known that the quotient set $\Trajs L/\bTheta$ turns into a poset $\tuple{\Trajs L/\bTheta, \btau/\bTheta}$  by defining
\begin{equation}
\tuple{u/\bTheta , v/\bTheta} \in \btau/\bTheta \defiff \pair u v\in \btau
\label{eqdsThBbTslw}
\end{equation}
for $u,v\in\Trajs L$; see, for example, (4.1) in Cz\'edli~\cite{czgtrajcolor}. We claim that, for any $u,v\in\Trajs L$, 
\begin{equation}
\tuple{u/\bTheta , v/\bTheta} \in \btau/\bTheta  \iff 
\Lmp u \leq  \Lmp v.
\label{eqszlmtjGmpNm}
\end{equation}
As the first step towards \eqref{eqszlmtjGmpNm}, we show that,
for any $u,v\in\Lamps L$,  
\begin{equation}
u/\bTheta = v/\bTheta \iff \Lmp u = \Lmp v.
\label{eqsRszTmrvPrl}
\end{equation}
The  $\Leftarrow$ part of \eqref{eqsRszTmrvPrl} is quite easy. Assume that $\Lmp u=\Lmp v$. Then it is clear by Figure~\ref{figsn} and \eqref{eqzksrCsdrWksv} that $\pair u v$ and $\pair v u$ belong to $\bsigma$. Hence, both $\pair u v$ and $\pair v u$ are in $\btau$, whence $u/\bTheta = v/\bTheta$. 

To show the converse implication,   assume that $u/\bTheta = v/\bTheta$ and, to exclude a trivial case,  $u\neq v$. This assumption gives that $\pair u v\in\btau$, and thus there is a $k\in\Nplu$ and there are elements $w_0=u$, $w_1$, \dots, $w_k=v$ such that $\tuple{w_{i-1},w_i}\in\bsigma$ for all $i\in\set{1,\dots,k}$. By \eqref{eqzksrCsdrWksv}, $u=u_0$ is a hat trajectory and 
$1_{\topedge u}=1_{\topedge {w_0}}\leq 1_{\topedge {w_1}}\leq \dots\leq 1_{\topedge {w_k}}=1_{\topedge {v}}$. That is, $1_{\topedge {u}}\leq 1_{\topedge {v}}$. Since $\pair v u$ is also in $\btau$, we also have that $v$ is a hat trajectory and $1_{\topedge {v}}\leq 1_{\topedge {u}}$. Hence, $1_{\topedge {v}}= 1_{\topedge {u}}$ and so $\Peak{\Lmp u}=\Peak{\Lmp v}$. Thus, $\Lmp u=\Lmp v$ by \eqref{eqtxtxzGrhcmnZnrGmm}, proving \eqref{eqsRszTmrvPrl}.

In the next step towards \eqref{eqszlmtjGmpNm},   \eqref{eqsRszTmrvPrl} allows us to assume that $u/\bTheta \neq v/\bTheta$, which is equivalent to $\Lmp u\neq \Lmp v$. First, we show that 
\begin{equation}
\text{if }\pair u v \in\bsigma,\text{ then } \Lmp u \leq \Lmp v.
\label{eqwRmNtnPsBw}
\end{equation}
Let $u=u_1,\dots, u_k$ and
$v=v_1,\dots, v_t$ be the neon tubes of $\Lmp u$ and $\Lmp v$, respectively. With reference to \eqref{eqpbxHzwsrZstnC} and  \eqref{eqtxtmpFrRHhW}, let $i$ and $j$ denote the subscripts such that $\Lmp u$ and $\Lmp v$ came to existence in $L_i$ and $L_j$, respectively. We obtain from $\pair u v \in\bsigma$ that $\Peak{\Lmp u}= 1_{\topedge {u}} \leq 1_{\topedge {v}}=\Peak{\Lmp v}$. By \eqref{eqzksrCsdrWksv}, $u$ is a hat trajectory, whence $\Lmp u$ is an internal lamp and so $i\geq 1$. Since none of the lattices $\Sn n$, $n\in\Nplu$,  is distributive, $i>j$. Since the sequence  \eqref{eqpbxHzwsrZstnC} is increasing, we obtain that $0_{\topedge v}\in L_{i-1}$. 
It is clear from 
\eqref{eqtxtmpFrRHhW} and the description of multifork extensions (see Figures~\ref{figmfXtns} and \ref{figmlsrNgrt}, and see also Figure~\ref{figsn}) that
\begin{equation}
\parbox{8.2cm}{if, according to \eqref{eqpbxHzwsrZstnC}, a lamp $K$ comes to existence in $L_\ell$, $x\in L_{\ell-1}$, and $\foot K\leq x$, then $\Peak K\leq x$.}
\label{eqpbxZsRxPwMskndRs}
\end{equation}
Suppose for a contradiction that $\foot{\Lmp u}\leq 0_{\topedge {v}}$. Applying \eqref{eqpbxZsRxPwMskndRs} with $\ell=i$ and $K=I$,  the already-mentioned $0_{\topedge v}\in L_{i-1}$
leads to $\Peak{\Lmp u}\leq 0_{\topedge {v}}$. But then
$0_{\topedge u}<1_{\topedge u}=\Peak{\Lmp u}\leq 0_{\topedge {v}}$, which is a contradiction since $0_{\topedge u}\not\leq0_{\topedge v}$ by  $\pair u v\in\bsigma$. Hence, $\foot{\Lmp u}\not\leq 0_{\topedge {v}}$. Combining this with 
$\foot{\Lmp v}\leq 0_{\topedge {v}}$, see \eqref{eqdRsrFsPd},
we obtain that $\foot{\Lmp u}\not\leq \foot{\Lmp v}$. We have already seen that $\Peak{\Lmp u} \leq \Peak{\Lmp v}$, whereby $\tuple{\Lmp u, \Lmp v}\in\rhalg$. This yields that 
$\Lmp u\leq \Lmp v$ since ``$\leq$'' is the reflexive transitive closure of $\rhalg$. Thus, we have shown the validity of \eqref{eqwRmNtnPsBw}.

Next, we assert that, for any $u,v\in\Trajs L$, 
\begin{equation}
\text{if }\tuple{\Lmp u, \Lmp v}\in \rhalg,\text{ then }
\tuple{ u,v}\in\btau.
\label{eqdzjlnTDseRkRkfkbRbfsn}
\end{equation}
To show this, let $v_1=v,v_2,\dots, v_k$ be the neon tubes of  $\Lmp v$. Assume that the pair $\tuple{\Lmp u, \Lmp v}$ belongs to $\rhalg$. Then    
\begin{equation}
\parbox{7.6cm}{$1_{\topedge {u}}=\Peak{\Lmp u}\leq \Peak{\Lmp v}=1_{\topedge {v_j}}$ for all 
 $j\in\set{1,\dots,k}$,}
\label{eqhszZsjtLpSJhHcbS}
\end{equation} 
and we know from Remark~\ref{remarkZhsmWgshp} that $\Lmp u$ is an internal lamp. Hence, $u$ is a hat trajectory.
On the other hand, $0_{\topedge u}\not\leq \foot{\Lmp v}$ since otherwise
$\foot{\Lmp u}\leq 0_{\topedge u}\leq \foot{\Lmp v}$ would contradict the containment
$\tuple{\Lmp u, \Lmp v}\in\rhalg$.  If we had that 
$0_{\topedge u}\leq 0_{\topedge {v_j}}$ for all $j\in\set{1,\dots,k}$, then we would obtain that
\[0_{\topedge u}\leq \bigwedge_{j\in\set{1,\dots,k}} 0_{\topedge {v_j}} \overset{\eqref{eqdRsrFsPd}}= \foot{\Lmp v},
\]
contradicting $0_{\topedge u}\not\leq \foot{\Lmp v}$. Hence, there exists a  $j\in\set{1,\dots,k}$ such that 
$0_{\topedge u}\not\leq 0_{\topedge {v_j}}$. Thus, using  
\eqref{eqhszZsjtLpSJhHcbS}, \eqref{eqzksrCsdrWksv}, and the fact that $u$ is a hat trajectory, we obtain that $\pair u{v_j}\in\bsigma$. Hence, $\pair u{v_j}\in\btau$. Also,  $\Lmp{v_j}=\Lmp v$ and \eqref{eqsRszTmrvPrl} yield that $\pair{v_j}v\in\btau$. By transitivity, $\pair u v\in\btau$, proving \eqref{eqdzjlnTDseRkRkfkbRbfsn}. 

By \eqref{eqdsThBbTslw}, what \eqref{eqszlmtjGmpNm} asserts is equivalent to the statement that $\pair u v\in\btau \iff \Lmp u\leq \Lmp v$. Since $\btau$ on $\Trajs L$ and ``$\leq$'' on $\Lamps L$ are the reflexive transitive closures of $\bsigma$ and $\rhalg$, respectively, the desired \eqref{eqszlmtjGmpNm} follows from \eqref{eqwRmNtnPsBw} and \eqref{eqdzjlnTDseRkRkfkbRbfsn}.

Next,  we are going to call a certain map a \emph{quasi-coloring}. (The exact meaning of a quasi-coloring was introduced in   Cz\'edli~\cite{czgreprhomr} and \cite{czgtrajcolor} but we do not need it here.)   
Following Definition 4.3(iv) of \cite{czgtrajcolor}, we define a map $\xi$ from the set $\Edges L$ of edges of $L$ to $\Trajs L$ by the rule $\inp \in\xi(\inp)$ for every $\inp \in \Edges L$. That is, $\xi$ maps an edge to the unique trajectory containing it. By Theorem 4.4 of \cite{czgtrajcolor}, $\xi$ is a quasi-coloring.   
It follows from Lemma 4.1 of \cite{czgtrajcolor} that 
\begin{equation}
\text{the posets }\tuple{\Jir L;\leq}\text{ and }\tuple{\Trajs L/\bTheta; \btau/\bTheta}\text{ are isomorphic}.
\label{eqzTmxVkBLkkRnjtSqr}
\end{equation}
By  \eqref{eqszlmtjGmpNm} and its particular case, \eqref{eqsRszTmrvPrl}, the structures
\begin{equation}
\tuple{\Trajs L/\bTheta, \btau/\bTheta}\text{ and }\tuple{\Lamps L; \leq}\text{ are also isomorphic}.
\label{eqzTmxprzsHrcvNts}
\end{equation}
We have already mentioned around \eqref{eqdsThBbTslw} and we also know from \eqref{eqzTmxVkBLkkRnjtSqr} that  $\tuple{\Trajs L/\bTheta, \btau/\bTheta}$ is a poset. 
Thus, $\tuple{\Lamps L; \leq}$ is also a poset by \eqref{eqzTmxprzsHrcvNts}, proving part \eqref{lemmamainb} of Lemma \ref{lemmamain}.

Combining \eqref{eqzTmxVkBLkkRnjtSqr} and \eqref{eqzTmxprzsHrcvNts}, it follows that $\tuple{\Jir L;\leq}\cong
\tuple{\Lamps L; \leq}$, which is the first half of part  \eqref{lemmamainc} of Lemma \ref{lemmamain}. 
It is straightforward to extract from  (2.6)--(2.8) of Cz\'edli \cite{czgreprhomr}, or from the proof of \eqref{eqzTmxVkBLkkRnjtSqr} or that of Theorem 7.3(i) of \cite{czgtrajcolor} that 
\begin{equation}
\parbox{8.9cm}{the map $\psi_1\colon \tuple{\Trajs L/\bTheta; \btau/\bTheta}\to \tuple{\Jir{\Con L};\leq}$ defined by $u/\bTheta\mapsto \con(\topedge u)$ is a poset isomorphism.}
\label{eqpbxZsfJklstrbBs}
\end{equation}
Observe that every lamp $I$ is of the form $\Lmp u$ for some $u\in\Trajs L$; indeed, we can choose $u$ as $\xi(\inp)$ for some (in fact, any) neon tube $\inp$ of $I$.
By \eqref{eqszlmtjGmpNm} and \eqref{eqsRszTmrvPrl}, 
\begin{equation}
\parbox{9cm}{the map $\psi_2\colon \tuple{\Lamps L;\leq} \to
\tuple{\Trajs L/\bTheta; \btau/\bTheta}$, defined by
$\Lmp u\mapsto u/\bTheta$, is also a poset isomorphism.}
\label{eqpbxsmsklJnwjmkcpT}
\end{equation}
Combining \eqref{eqdzhTslRjGnw}, \eqref{eqpbxZsfJklstrbBs}, and \eqref{eqpbxsmsklJnwjmkcpT}, we obtain that $\phi=\psi_2\cdot \psi_1$. Hence, $\phi$ is also a poset isomorphism, proving part \eqref{lemmamainc} of Lemma \ref{lemmamain}.

Finally, if a partial ordering $\leq$ is the reflexive transitive closure of a relation $\rho$, then the covering relation $\prec$ with respect to $\leq$ is obviously a subset of $\rho$. This yields 
part \eqref{lemmamaind} and completes the proof of Lemma~\ref{lemmamain}.
\end{proof}

\section
{Some consequences of Lemma~\ref{lemmamain} and some properties of lamps}
\label{sectioneasycons}
Some statements of this section are explicitly devoted to congruence lattices of slim planar semimodular lattices; they are called corollaries since they are derived from Lemma~\ref{lemmamain}. The rest of the statements of the section deal with lamps.

Convention~\ref{convsRhRtskTskk} raises the (easy) question whether lamps are determined by their foots and how. For an element $u\in L\setminus \set 1$, let $\cov u$ denote the join of all covers of $u$, that is,
\begin{equation}
\cov u:=\bigvee\set{y\in L: u\prec y},\quad \text{provided}\quad u\neq 1.
\label{eqyyZswrRsHnRbp}
\end{equation}
Note  that $\cov 1$ is undefined.
Note also that $\bigvee$ in \eqref{eqyyZswrRsHnRbp} applies actually to one or two joinands  since  each $u\in L\setminus\set 1$ has either a single cover, or it has exactly two covers by Gr\"atzer and Knapp~\cite[Lemma 8]{gratzerknapp1}.
Let $x\in L$ and define the element $\lift x$ by induction on the number of elements of the principal filter $\filter x$ as follows.
\begin{equation}
\lift x=
\begin{cases}
1,&\text{if }x=1;\cr
x^+,&\text{if }\exists y\in\Mir L\text{ such that  }y\prec x^+;\cr
\lift{\cov x},&\text{otherwise}.
\end{cases}
\end{equation}

\begin{lemma}\label{lemmalfT}
If $L$ is a slim rectangular lattice, then $\Peak I=\lift{\foot I}$ holds for every $I\in \Lamps L$.
\end{lemma}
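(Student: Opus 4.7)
The plan is to split into two cases depending on whether $I$ has a single neon tube or several.

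I would first dispose of the case when $I$ is itself a neon tube; this covers every boundary lamp and every internal lamp with a unique neon tube. Here $p := \foot I \in \Mir L$, so $p$ has a unique upper cover, which must be $q := \Peak I$; hence $p^+ = q$. Since $p$ is already a meet-irreducible lower cover of $p^+$, the second clause of the recursive definition of $\lift$ fires at the very first step and yields $\lift p = p^+ = q$.

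The remaining case is an internal lamp $I = [p, q]$ with at least two neon tubes. Using \eqref{eqdRsrFsPd} and the fact that distinct neon tube feet $p_i, p_j$ of $I$ are incomparable (both being lower covers of $q$, none can lie strictly below another), one first observes that the meet $p$ is strictly below each $p_i$; hence $p$ cannot be meet-irreducible, and by the Gr\"atzer--Knapp cover bound quoted in the paper, $p$ has exactly two upper covers $c_L, c_R$ in $L$. My plan is to track the strictly increasing sequence $p_0 := p$, $p_{j+1} := p_j^+$ and to prove three assertions: (a) each $p_j$ lies in $[p, q]$; (b) for every $j \geq 1$ with $p_j < q$, the element $p_j$ has no meet-irreducible lower cover in $L$; (c) the sequence reaches $q$. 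Together with the fact that $q$ always has a meet-irreducible lower cover (any neon tube foot of $I$), these three assertions force the recursion of $\lift$ to halt precisely at $q$, giving $\lift p = q$.

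For (a), I would invoke Definition~\ref{defgshscdzrkrrl}: the two lower edges of the body $\body I$ are of normal slope, so $c_L$ and $c_R$ sit at the bottom ends of these edges, which lead up (inside the body) to the leftmost and rightmost lower covers of $q$; consequently $c_L, c_R \leq q$ and $p^+ = c_L \vee c_R \leq q$, and the same reasoning iterates to show every $p_j \leq q$. Claim (c) is immediate from the finiteness of $L$ together with the strict monotonicity of $p_0 < p_1 < \cdots$. The main obstacle is (b): I need that intermediate $p_j$ are not tops of neon tubes. For this I would combine Remark~\ref{remarksZsSTmVbQxPT} (neon tubes in a $\bdia$-diagram are precisely the precipitous edges together with the upper-boundary edges) with the geometric content of \eqref{eqpbxJrNfhgtWcBvmb}: the distributive $4$-cell from which $I$ arose via the multifork-extension sequence \eqref{eqpbxHzwsrZstnC} has every edge in its lower ideal of normal slope, and by \eqref{eqpbxZtSrptnK} together with Lemma~\ref{lemmamrkXtnDsZs}, subsequent multifork extensions performed at distributive $4$-cells inside $\body I$ do not introduce precipitous edges strictly below $q$. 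Hence no intermediate $p_j$ possesses a precipitous lower edge in $L$, so none has a meet-irreducible lower cover, establishing (b) and completing the plan.
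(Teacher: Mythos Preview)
Your case split and the single-neon-tube case are fine and far more explicit than the paper's one-line ``trivial by Figure~\ref{figsn} and \eqref{eqtxtmpFrRHhW}''. The real problem is step~(b). You assert that later multifork extensions at distributive $4$-cells inside $\body I$ cannot introduce precipitous edges strictly below $q=\Peak I$, but the opposite is true: right after $I$ is created, the only precipitous edges in $\ideal q$ are the neon tubes $[p_i,q]$ themselves, so for any $4$-cell $[r,s]$ inside $\body I$ with $s<q$ the ideal $\ideal s$ contains only normal-slope edges and hence, by \eqref{eqpbxJrNfhgtWcBvmb}, that cell \emph{is} distributive. A subsequent multifork there produces a new precipitous neon tube strictly below $q$.

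This gap is not repairable, because the lemma as literally stated fails in exactly this situation. In $\Sn 3$ let $p_1,p_2,p_3$ be the internal neon-tube feet and $f:=\foot I=p_1\wedge p_2\wedge p_3$; the $4$-cell $[f,p_2]=\{f,\,p_1\wedge p_2,\,p_2\wedge p_3,\,p_2\}$ is a distributive $4$-cell, and a rank-$1$ multifork there yields a slim rectangular lattice $L'$ with a new lamp $I'=[p',p_2]$. Since $L_{i-1}$ is a sublattice of $L_i$, the old lamp $I=[f,1]$ keeps its foot $f$ in $L'$. But now the two covers of $f$ are the new elements $u,v$ subdividing the old edges $[f,p_1\wedge p_2]$ and $[f,p_2\wedge p_3]$, and $f^+=u\vee v=p'$; then $(p')^+=p_2$, which already has the meet-irreducible lower cover $p'$. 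The recursion therefore halts at $p_2$, giving $\lift f=p_2\neq 1=\Peak I$. Thus your sequence reaches an intermediate element with a meet-irreducible lower cover, (b) fails, and the lemma itself fails in $L'$. The paper's one-line argument overlooks the same phenomenon; since Lemma~\ref{lemmalfT} is not invoked anywhere else in the paper, this does not affect the main results.
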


\begin{proof} The proof is trivial by Figure~\ref{figsn} and  \eqref{eqtxtmpFrRHhW}.
\end{proof}

\begin{lemma}[Maximal lamps are boundary lamps]\label{lemmaXcxbndry} If $L$ is a slim rectangular lattice, then the maximal elements of $\Lamps L$ are exactly the boundary lamps.
\end{lemma}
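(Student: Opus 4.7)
The plan is to prove the two directions separately: every boundary lamp is maximal, and every internal lamp has a strictly larger lamp above it.

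For the boundary direction, I would assume that a boundary lamp $I$ is not maximal and derive a contradiction. Pick any $J \in \Lamps L$ with $I < J$ and then choose a covering pair $I \prec I_1$ in $\Lamps L$ with $I_1 \leq J$. By Lemma~\ref{lemmamain}\eqref{lemmamaind}, $\pair{I}{I_1} \in \rhalg$, and Remark~\ref{remarkZhsmWgshp} then forces $I$ to be an internal lamp, which contradicts the hypothesis. This half is almost immediate.

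For the internal-implies-non-maximal direction, the idea is to exhibit, geometrically, a boundary lamp $J$ whose illuminated set contains $\foot I$, and then appeal to Lemma~\ref{lemmamain}\eqref{lemmamaina}. Since $I$ is internal, $\foot I \notin \leftb L \cup \rightb L$, so $\foot I$ lies strictly inside the full geometric rectangle. By Convention~\ref{convxpllzT}, this rectangle is a diamond whose four sides have slopes $\pm 1$: the bottom-left and top-right sides have slope $-1$, the bottom-right and top-left sides have slope $+1$. I would consider the half-line $\ell$ starting at $\foot I$ in direction $\pair{-1}{+1}$, which has slope $-1$. Being parallel to two of the four sides, $\ell$ must exit the full rectangle through one of the remaining sides; since it moves upward, that side is the top-left boundary $\filter\cornl L$. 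A short calculation with the explicit coordinates shows that the exit point lies strictly between $\cornl L$ and $1$, hence on some edge $[c, c^+]$ of $\filter \cornl L$ (or at an interior vertex, in which case I pick either adjacent edge). By Remark~\ref{remarksZsSTmVbQxPT}, this edge is a neon tube, and as it lies on the boundary, it is the unique neon tube of a boundary lamp $J$.

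Since $\ell$ meets this neon tube by construction, $\foot I \in \Enl J$ by Definition~\ref{defenlstS}; and because $I$ is internal while $J$ is boundary, $I \neq J$. Therefore $\pair I J \in \rhfoot$, and Lemma~\ref{lemmamain}\eqref{lemmamaina} gives $\pair I J \in \rhalg$; combined with Lemma~\ref{lemmamain}\eqref{lemmamainb}, this yields $I < J$ in $\Lamps L$, so $I$ is not maximal. The only real obstacle is the small planar-geometry check that a slope-$-1$ upward ray from an interior point of the diamond exits through $\filter \cornl L$; once the four boundary slopes are tabulated, this is a one-line computation, and every other step is a direct citation of Lemma~\ref{lemmamain} and Remarks~\ref{remarksZsSTmVbQxPT} and \ref{remarkZhsmWgshp}.
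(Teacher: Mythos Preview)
Your proof is correct. For the first direction both you and the paper argue the same way, via Remark~\ref{remarkZhsmWgshp} and Lemma~\ref{lemmamain}\eqref{lemmamaind}. For the second direction the routes diverge: the paper simply invokes the multifork structure \eqref{eqtxtmpFrRHhW}, observing that every internal lamp arises from a multifork extension at a distributive $4$-cell of some $L_{i-1}$ and hence sits (via $\rhgeomc$, say) below boundary lamps already present in the initial grid $L_0$. Your argument is instead directly geometric: a normal-slope ray from $\foot I$ must cross an upper-boundary neon tube, placing $\foot I$ in $\Enl J$ for some boundary lamp $J$, and then $\rhfoot=\rhalg$ finishes. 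Your route trades the inductive multifork machinery for an elementary planar computation in the $\bdia$-diagram, while the paper's route stays inside its established inductive framework and so dovetails with the proofs of the later corollaries. One minor point: the claim that $\foot I$ lies strictly inside the full rectangle is not quite immediate from the bare definition of an internal lamp (the neon-tube feet $p_i$ are off the boundary, but one must still say why their meet is); this fact, however, is established in the proof of Lemma~\ref{lemmamain}, where $\foot I$ is shown to lie in the topological interior of $\cirrec I$, so a citation there closes the gap.
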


\begin{proof}
We know from Gr\"atzer and Knapp~\cite{gratzerknapp3} that each  slim planar semimodular lattice $L$ with at least three elements has a so-called congruence-preserving rectangular extension $L'$; see also Cz\'edli~\cite{czgrectectdiag} and Gr\"atzer and Schmidt~\cite{ggscht-periodica2014} for stronger versions of this result. Among other properties of this $L'$, we have that  $\Con {L'}\cong\Con L$. 
Hence, to simplify the notation, 
\begin{equation}
\parbox{7cm}{we can assume 
in the proof that $L$ is a slim rectangular lattice with a fixed $\bdia$-diagram.}
\label{eqtxtszHrsrZrknSzkMr}
\end{equation}
Note that the same assumption will be made in many other proofs when we know that $|L|\geq 3$.  
Armed with \eqref{eqtxtszHrsrZrknSzkMr}, 
Lemma~\ref{lemmaXcxbndry} follows trivially from \eqref{eqtxtmpFrRHhW} and Lemma~\ref{lemmamain}. 
\end{proof}

The \emph{set of maximal elements} of a poset $P$ will be denoted by $\Max P$. 

\begin{corollary}[P2 property from Gr\"atzer~\cite{ggSPS8}]\label{corollstzRsG} If $L$ is a slim planar semimodular lattice with at least three elements, then $\Con L$ has at least two coatoms or, equivalently, $\Jir{\Con L}$ has at least two maximal elements.
\end{corollary}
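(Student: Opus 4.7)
The plan is to reduce to a slim rectangular lattice and then exhibit two boundary lamps coming from the two corner elements.

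First, following the opening move in the proof of Lemma~\ref{lemmaXcxbndry}, I would invoke Gr\"atzer and Knapp's congruence-preserving rectangular extension: any slim planar semimodular lattice with at least three elements embeds in a slim rectangular lattice $L'$ with $\Con{L'}\cong\Con L$. So it suffices to prove the corollary under the assumption that $L$ itself is a slim rectangular lattice with a fixed $\bdia$-diagram. Under this assumption, Lemma~\ref{lemmamain}\eqref{lemmamainc} tells us that $\Jir{\Con L}\cong\tuple{\Lamps L;\leq}$, so I only need to exhibit two distinct maximal elements of $\Lamps L$. By Lemma~\ref{lemmaXcxbndry}, this amounts to exhibiting two distinct boundary lamps.

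Next, I would use the corner elements. Since $|L|\geq 3$ and $L$ is rectangular, the left corner $\cornl L$ and the right corner $\cornr L$ exist, are doubly irreducible, and are distinct (indeed, complementary). As doubly irreducible non-unit elements, both lie in $\Mir L$ and each has a unique upper cover, say $u_L$ and $u_R$. Hence $[\cornl L,u_L]$ and $[\cornr L,u_R]$ are edges of $L$ whose lower endpoints belong to $\Mir L$, i.e.\ they are neon tubes. Because the feet $\cornl L$ and $\cornr L$ lie on $\leftb L$ and $\rightb L$ respectively, each of these neon tubes is a boundary lamp by definition. Since $\cornl L\neq\cornr L$ (and boundary lamps have single neon tubes determined by their foot), these are two \emph{distinct} boundary lamps. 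By Lemma~\ref{lemmaXcxbndry} they are maximal in $\Lamps L$, and via the isomorphism $\phi$ of Lemma~\ref{lemmamain}\eqref{lemmamainc} they give two distinct maximal elements of $\Jir{\Con L}$.

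Finally, the ``equivalently'' clause is the standard distributive-lattice fact that in a finite distributive lattice $D$, the map $m\mapsto D\setminus\filter m$ is a bijection between $\Max{\Jir D}$ and the set of coatoms of $D$; applying this to $D=\Con L$ finishes the proof. The main obstacle is essentially nonexistent here: the whole argument is a short application of the Main Lemma and Lemma~\ref{lemmaXcxbndry}, and the only point requiring any care is verifying that the corner elements yield genuine boundary lamps, which is immediate from the definitions together with the fact that doubly irreducible elements lie in both $\Jir L$ and $\Mir L$.
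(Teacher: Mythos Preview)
Your proof is correct and follows essentially the same route as the paper: reduce to the rectangular case, invoke Lemma~\ref{lemmaXcxbndry}, and use the standard correspondence between $\Max{\Jir D}$ and the coatoms of a finite distributive lattice $D$. The only difference is that the paper simply says ``Lemma~\ref{lemmaXcxbndry} applies'' and leaves implicit that a slim rectangular lattice has at least two boundary lamps (one on each top boundary chain), whereas you explicitly construct two such lamps from the corner elements; your extra detail is fine and the argument goes through.
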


\begin{proof} Assume \eqref{eqtxtszHrsrZrknSzkMr}. The well-known representation theorem of finite distributive lattices  (see, for example, Gr\"atzer~\cite[Theorem 107]{r:Gr-LTFound}) easily implies that 
$\Con L$ has at least two coatoms if an only if $\Jir{\Con L}$ has at least two maximal elements. Hence, Lemma~\ref{lemmaXcxbndry} applies.
\end{proof}

The covering relation in a poset $P$ is denoted by $\prec$ or, if confusion threatens, by $\prec_P$. For sets $A_1$, $A_2$, and $A_3$, the notation $A_3=A_1\mathop{\dot\cup} A_2$ will stand for the conjunction of $A_1\cap A_2=\emptyset$ and $A_3=A_1\cup A_2$.

\begin{corollary}[Bipartite  maximal elements property]\label{corolnWdWr} Let $L$ be a slim planar semimodular lattice with at least three elements and let $D:=\Con L$.  Then there exist nonempty sets  $\lmax{\Jir D}$ and $\rmax{\Jir D}$ such that  
\[\Max{\Jir D}= \lmax{\Jir D} \mathop{\dot\cup} \rmax{\Jir D}
\] 
and for each $x\in \Jir D$ and $y,z\in\Max{\Jir D}$, if $x\jirdprec y$, $x\jirdprec z$, and $y\neq z$, then neither $\set{y,z}\subseteq \lmax{\Jir D}$, nor 
 $\set{y,z}\subseteq \rmax{\Jir D}$. 
Furthermore, when $\Jir D=\Jir{\Con L}$ is represented in the form $\Lamps L$ according to Lemma~\ref{lemmamain}\eqref{lemmamainc}, then $\lmax{\Jir D}$ can be chosen so that its members correspond to the boundary lamps on the top left boundary chain of $L$ while the members of $ \rmax{\Jir D}=\max{\Jir D}\setminus \lmax{\Jir D}$ correspond to the boundary lamps on the top right boundary chain.
\end{corollary}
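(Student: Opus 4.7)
My plan is to translate the problem into $\Lamps L$ via Lemmas~\ref{lemmamain} and \ref{lemmaXcxbndry}, define the bipartition by which top boundary chain a boundary lamp sits on, and derive the forbidden configuration from a short chain calculation combined with Lemma~\ref{lemmamain}\eqref{lemmamaind}. Following the opening of the proof of Lemma~\ref{lemmaXcxbndry}, I first invoke the congruence-preserving rectangular extension to assume that $L$ is slim rectangular with a fixed $\bdia$-diagram, and then I use the isomorphism $\phi\colon \Lamps L\to\Jir D$ from Lemma~\ref{lemmamain}\eqref{lemmamainc} to work entirely inside $\Lamps L$; Lemma~\ref{lemmaXcxbndry} identifies the image of $\Max{\Jir D}$ with the set of boundary lamps of $L$.

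Next I set up the bipartition. The foot $p$ of any boundary lamp lies in $\filter{\cornl L}\cup\filter{\cornr L}$: if instead $p\in\ideal{\cornl L}\setminus\set{\cornl L}$, then $p$ is join-irreducible (since $\Jir L\cup\set 0=\ideal{\cornl L}\cup\ideal{\cornr L}$) as well as meet-irreducible, so $p$ is a second doubly irreducible element on the left boundary, contradicting the uniqueness of $\cornl L$; the case $p\in\ideal{\cornr L}\setminus\set{\cornr L}$ is symmetric. Since $\filter{\cornl L}\cap\filter{\cornr L}=\set 1$ and $1\notin\Mir L$, the two top chains partition the possible locations of the foot. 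Let $\lmax{\Jir D}$ (resp.\ $\rmax{\Jir D}$) consist of the boundary lamps whose foot lies on $\filter{\cornl L}$ (resp.\ $\filter{\cornr L}$); by construction, $\Max{\Jir D}=\lmax{\Jir D}\mathop{\dot\cup}\rmax{\Jir D}$. Both sides are nonempty because the unique upper cover $q$ of $\cornl L$ (unique since $\cornl L$ is doubly irreducible) yields a boundary lamp $[\cornl L,q]$ with foot $\cornl L\in\filter{\cornl L}$, and symmetrically for the right side.

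The crucial step is the bipartite covering condition. Suppose for contradiction that $x\jirdprec y$, $x\jirdprec z$, $y\neq z$, and $y,z\in\lmax{\Jir D}$; let $I,J_1,J_2$ be the corresponding lamps under $\phi^{-1}$. Lemma~\ref{lemmamain}\eqref{lemmamaind} gives $\tuple{I,J_1},\tuple{I,J_2}\in\rhalg$, so $\Peak I\leq\Peak{J_k}$ and $\foot I\not\leq\foot{J_k}$ for $k\in\set{1,2}$. Distinct boundary lamps on the same top chain cannot share their peak, for such a common peak would cover two distinct elements $\foot{J_1}\neq\foot{J_2}$ of the chain $\filter{\cornl L}$; hence WLOG $\Peak{J_1}<\Peak{J_2}$. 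Because $\foot{J_2}\prec\Peak{J_2}$ in $L$ (this cover is unique by meet-irreducibility of $\foot{J_2}$) and both lie on the chain $\filter{\cornl L}$, $\foot{J_2}$ is the chain-predecessor of $\Peak{J_2}$, whence $\Peak{J_1}\leq\foot{J_2}$. Then
\[\foot I\leq\Peak I\leq\Peak{J_1}\leq\foot{J_2},\]
contradicting $\foot I\not\leq\foot{J_2}$; the case $y,z\in\rmax{\Jir D}$ is symmetric. The only point requiring care is ruling out feet on the bottom boundary chains, but this follows immediately from the description of $\Jir L$ on the lower boundary and the uniqueness of the corners as doubly irreducible elements, so no serious obstacle remains.
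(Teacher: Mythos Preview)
Your proof is correct and, in the crucial step, takes a genuinely different route from the paper's argument.

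For the bipartition, the paper simply declares $\lmax{\Lamps L}$ and $\rmax{\Lamps L}$ to be the boundary lamps on $\filter{\cornl L}$ and $\filter{\cornr L}$, implicitly relying on Remark~\ref{remarksZsSTmVbQxPT} (neon tubes are precipitous or on the upper boundary) to justify that every boundary lamp lies on one of the two top chains. Your explicit argument via double irreducibility and the uniqueness of the corners is a clean self-contained substitute.

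The main divergence is in deriving the contradiction. The paper introduces the notion of \emph{sufficiently disjoint} illuminated sets (see \eqref{eqpbxsfDjsknkTs}), observes that $\Enl I$ and $\Enl J$ are sufficiently disjoint for distinct $I,J\in\lmax{\Lamps L}$, and then uses $\rhgeomb$ from Lemma~\ref{lemmamain}\eqref{lemmamaind} to get $\body K\subseteq\Enl I\cap\Enl J$, which is impossible because $\body K$ contains a precipitous neon tube. Your argument instead stays with $\rhalg$: from $\pair I{J_1},\pair I{J_2}\in\rhalg$ and the fact that $J_1,J_2$ are edges of the chain $\filter{\cornl L}$, you deduce $\Peak{J_1}\leq\foot{J_2}$ (after ordering) and hence $\foot I\leq\Peak I\leq\Peak{J_1}\leq\foot{J_2}$, contradicting $\foot I\not\leq\foot{J_2}$. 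This is shorter and purely order-theoretic; it avoids the geometric machinery entirely. What the paper's approach buys is that the ``sufficiently disjoint'' concept is reused later in the proof of Theorem~\ref{thmmain}, so it is set up here as infrastructure; your approach trades that reusability for a more elementary, self-contained argument at this point.
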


\begin{proof} Armed with \eqref{eqtxtszHrsrZrknSzkMr} again, we know from Lemma~\ref{lemmaXcxbndry} that  the maximal lamps are on the upper boundary. 
Let $\lmax{\Lamps L}$ and  $\rmax{\Lamps L}$ denote the set of boundary lamps on the top left boundary chain $\filter \cornl L$ and those on the top right boundary chain $\filter \cornr L$, respectively. Since $L$ is rectangular,
none of these two sets is empty. So these two sets form a partition of  $\max{\Lamps L}$.
Let us say that, for $I',J'\in \Lamps L$, 
\begin{equation}
\parbox{7.9cm}{$\Enl {I'}$ and $\Enl {J'}$ are \emph{sufficiently disjoint}
if for every line segment $S$ of positive length in the plane, if $S\subseteq \Enl {I'}\cap \Enl {J'}$, then $S$ is of a normal slope.}
\label{eqpbxsfDjsknkTs}
\end{equation}
Clearly, if $\Enl {I'}$ and $\Enl {J'}$ are sufficiently disjoint, then no nonempty open set of $\mathbb R^2$ is a subset of $\Enl {I'}\cap \Enl {J'}$.

Let $I,J\in \lmax{\Lamps L}$ such that $I\neq J$. 
There are two easy ways to see that $\Enl I$ and $\Enl J$ are sufficiently disjoint: either we apply \eqref{eqpbxZhRsjPzrSt}, or we use \eqref{eqpbxdhzWnhRgbBBsWt} with $\pair i j=\pair 0 k$.
Suppose, for a contradiction, that $K\prec_{\Lamps L} I$ and $K\prec_{\Lamps L} J$. Then, by 
parts \eqref{lemmamaina} and \eqref{lemmamaind} of Lemma~\ref{lemmamain}, $\pair K I\in \rhgeomb$. Similarly, $\pair K J\in \rhgeomb$, and so we have that $\body K\subseteq \Enl I\cap\Enl J$. Since $K$ is an internal lamp, it  contains a precipitous neon tube $S$, which contradicts  the sufficient disjointness of $\Enl I$ and $\Enl J$.  By Lemma~\ref{lemmamain} and left-right symmetry, we conclude Corollary~\ref{corolnWdWr}.
\end{proof}

\begin{corollary}[Dioecious maximal elements property]\label{coroldioec} If $L$ is a slim planar semimodular lattice, $D:=\Con L$,  $x\in \Jir D$, $y\in\Max{\Jir D}$, and $x\jirdprec y$, then there exists an element $z\in\Jir D$ such that $z\neq y$ and $x\jirdprec z$.
\end{corollary}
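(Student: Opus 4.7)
By the congruence-preserving rectangular extension used in the proof of Lemma~\ref{lemmaXcxbndry}, I reduce to the case that $L$ is a slim rectangular lattice with a fixed $\bdia$-diagram. Via the isomorphism $\phi$ of Lemma~\ref{lemmamain}\eqref{lemmamainc}, translate $x,y$ to lamps $X,Y\in\Lamps L$ with $X\prec Y$ in $\Lamps L$ and $Y$ maximal. By Lemma~\ref{lemmaXcxbndry}, $Y$ is a boundary lamp; and $X$, being not maximal, is internal. By left-right symmetry, suppose $Y$ lies on the top left boundary chain $\filter{\cornl L}$; rectangularity ensures $\filter{\cornr L}$ is nonempty as well.

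It is enough to exhibit any lamp $W\in\Lamps L$ with $X<W$ in $\Lamps L$ and $W\neq Y$: then any cover $Z$ of $X$ in the interval $[X,W]$ of $\Lamps L$ is the required second cover, since $Z=Y$ would force $Y\leq W$, whence $W=Y$ by the maximality of $Y$, a contradiction. The plan is to find such a $W$ to the east of $X$, namely as either a boundary lamp on $\filter{\cornr L}$ or an internal lamp situated to the northeast of $\cirrec X$.

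To construct $W$, I invoke \eqref{eqpbxwizgRpsrbrWnGskS}: the upper right side of the circumscribed rectangle $\cirrec X$ is a prime interval $\inp$ of normal slope $-1$, running up-left from the right vertex of $\cirrec X$ to $\Peak X$. Let $u\in\Trajs L$ be the trajectory containing $\inp$, and set $W:=\Lmp u$. The $4$-cell adjacent to $\inp$ on its northeast side (if it exists) has $\inp$ as its lower-left side, so $u$ propagates up-right from $\inp$ through a sequence of slope-$(-1)$ edges. Consequently, $\topedge u$ either lies on $\filter{\cornr L}$ (making $W$ a boundary lamp on the top right) or is a precipitous interior neon tube strictly northeast of $\cirrec X$; in either case $W\neq Y$, because the unique neon tube of $Y$ lies on $\filter{\cornl L}$ and has slope $+1$, not $-1$. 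Then $\cirrec X\subseteq\Enl W$ ought to follow from $\inp$ sitting on a bounding ray of $\Enl W$, giving $\pair X W\in\rhgeomc$ and hence $X<W$ in $\Lamps L$ by Lemma~\ref{lemmamain}\eqref{lemmamaina}.

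The main obstacle is precisely this geometric tracking of $u$: verifying that $\topedge u$ really does sit east of $\cirrec X$ (so that $W\neq Y$) and that $\cirrec X\subseteq\Enl W$. Both steps require careful bookkeeping using the $\bdia$-diagram constraints (Convention~\ref{convxpllzT}, Remark~\ref{remarksZsSTmVbQxPT}), the multifork structure (Lemma~\ref{lemmamrkXtnDsZs}), and the shape of illuminated regions described by \eqref{eqpbxnfhRmsrlmrtlTslVz} and \eqref{eqpbxZhRsjPzrSt}; the mild edge case when $\inp$ itself already lies on $\filter{\cornr L}$ is handled immediately by taking $W$ to be the top right boundary lamp whose unique neon tube is $\inp$.
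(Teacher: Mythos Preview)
Your trajectory approach is different from the paper's and can in fact be completed, but the step you flag as the \idez{main obstacle} is a genuine gap, and the reason you offer for $\cirrec X\subseteq\Enl W$ is not correct. The segment $\inp$ does not sit on a bounding ray of $\Enl W$: the relevant boundaries $\LRoof W$ and $\LFloor W$ have slope $\pi/4$, while $\inp$ has slope $3\pi/4$, and $\inp$ is not a neon tube of $W$ except in the boundary edge case. What actually makes the argument go through is that the \emph{bottoms} $0_\inp=0_{e_0},0_{e_1},\dots,0_{e_m}=0_{\topedge u}$ of the edges along the northeast arm all lie on a single line of slope $\pi/4$ (each consecutive pair is joined by the lower-right side of a $4$-cell, and lower sides are never precipitous). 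Since $\Peak X=1_\inp$ lies strictly to the northwest of this line, it is outside the downward cone from $0_{\topedge u}\geq\foot W$, so $\Peak X\not\leq\foot W$; then \eqref{eqpbxZsRxPwMskndRs}, applied with $K=X$ (after checking that $W$ is born before $X$), yields $\foot X\not\leq\foot W$, and together with $\Peak X\leq\Peak W$ one gets $\pair X W\in\rhalg$. None of this is in your write-up, and it is more than bookkeeping.

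The paper takes a shorter route that bypasses trajectories. The $4$-cells of the initial grid $L_0$ partition the full rectangle into squares $T_1,\dots,T_m$, each equal to $\Enl A\cap\Enl B$ for exactly one left boundary lamp $A$ and one right boundary lamp $B$. By \eqref{eqpbxwizgRpsrbrWnGskS} the sides of $\cirrec X$ are edges of $L$, and by planarity they cannot cross the sides of any $T_j$; hence $\cirrec X$ lies in a single $T_j$ and so in $\Enl K$ for the right boundary lamp $K$ of that square. Then $\pair X K\in\rhgeomc$ gives $X<K$ in $\Lamps L$, and any cover of $X$ below $K$ is the required second cover distinct from $Y$. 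This produces a \emph{boundary} lamp directly and avoids analysing how trajectories behave across successive multifork extensions.
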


The adjective ``dioecious'' above is explained by the idea of interpreting $x\prec y$ as ``$x$ is a child of $y$''.

\begin{proof}[Proof of  Corollary~\ref{coroldioec}] 
If $|L|<3$, then $|\Jir L|\leq 1$ an the statement is trivial. Hence, we can assume that $|L|\geq 3$ and that $L$ is rectangular; see \eqref{eqtxtszHrsrZrknSzkMr}.
Assume that $I\in\Lamps L$, $J\in\Max{\Lamps L}$, and $I\prec J$ in $\Lamps L$. We know from Lemma~\ref{lemmaXcxbndry} that $I$ is an internal lamp while $J$ is a boundary lamp. 
For the sake of contradiction, suppose that $J$ is the only cover of $I$ in $\Lamps L$. By left-right symmetry and
Lemma~\ref{lemmaXcxbndry}, we can assume that (the only neon tube of) $J$ is on the top left boundary chain of $L$. 
With reference to \eqref{eqpbxHzwsrZstnC}, the illuminated sets of the lamps of $L_0$, which are boundary lamps, divide the 
full geometric rectangle of $L_0$ into pairwise sufficiently disjoint (topologically closed) rectangles $T_1,\dots, T_m$. That is, $T_1,\dots, T_m$ are the squares (that is, the 4-cells) of the initial grid $L_0$. By \eqref{eqpbxdhzWnhRgbBBsWt}, the illuminated sets of the boundary lamps of $L$ (rather than $L_0$) divide the full geometric rectangle of $L$ into the same rectangles, and the same holds for all $L_i$, $i\in\set{0,1,\dots,k}$. We know from \eqref{eqtxtmpFrRHhW}
that, for some $i\in\set{1,\dots,k}$, each of the four sides of the rectangle $\cirrec I$ is an edge in $L_i$.  Since no two edges of $L_i$ cross each other by planarity (see also Kelly and Rival~\cite[Lemma 1.2]{kellyrival}), it follows from \eqref{eqpbxZtSrptnK} that the sides (in fact, edges) of $\cirrec I$ in $L_i$ do not cross the sides of $T_1$, \dots, $T_m$. Hence, still in $L_i$,
$\cirrec I$ is fully included in one of the $T_1,\dots, T_m$. This also holds in $L$ since $\cirrec I$ is the same in $L$ as in $L_i$ by \eqref{eqtxtmpFrRHhW}.
Hence, there is lamp $K$ on the top right boundary chain of $L$ such that $\cirrec I\subseteq \Enl K$. Hence, $\pair I K\in\rhgeomc$, whence parts \eqref{lemmamaina} and \eqref{lemmamainb} of Lemma~\ref{lemmamain} give that  $I<K$ in $\Lamps L$. By finiteness, 
we can pick a lamp $K'$ such that $I\prec K'\leq K$ in $\Lamps L$. 
We have assumed that $J$ is the only cover of $I$, whereby $K'=J$ and so $J=K'\leq K$. The inequality here cannot be strict since both $J$ and $K$ belong to $\Max{\Lamps L}$. Hence, $J=K$, but this is a contradiction since $J$ is on the top left boundary chain of $L$ while $K$ is on the top right boundary chain. Since $\Jir D\cong \Lamps L$ by Lemma~\ref{lemmamain}, we have proved Corollary~\ref{coroldioec}.
\end{proof}

\begin{corollary}[Two-cover Theorem from  Gr\"atzer~\cite{ggtwocover}]\label{coroltwocover} If $L$ is a slim planar semimodular lattice and $D:=\Con L$, then  for every $x\in \Jir D$, the set $\set{y\in \Jir D: x\jirdprec y}$ of covers of $x$ with respect to $\jirdprec$ consists of at most two elements.
\end{corollary}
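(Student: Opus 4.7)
The plan is to use Lemma~\ref{lemmamain} to transport the statement to the lamp poset and then exploit the left/right dichotomy of illumination provided by part \eqref{lemmamaina} of that lemma.

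First I will invoke the usual reduction \eqref{eqtxtszHrsrZrknSzkMr} to assume that $L$ is slim rectangular with a fixed $\bdia$-diagram and $|L|\ge 3$ (the smaller cases being trivial, since then $|\Jir D|\le 1$). By part \eqref{lemmamainc} of Lemma~\ref{lemmamain}, it suffices to show that every lamp $I\in\Lamps L$ has at most two covers in $\tuple{\Lamps L;\le}$. Boundary lamps are maximal in $\Lamps L$ by Lemma~\ref{lemmaXcxbndry} and so have no covers; hence I may assume that $I$ is internal. For each cover $J$ of $I$, parts~\eqref{lemmamaind} and~\eqref{lemmamaina} of Lemma~\ref{lemmamain} give $\pair I J\in \rhalg=\lrhgeomb$, so $\body I\subseteq \LEnl J$ (I shall call $J$ a \emph{left-cover}) or $\body I\subseteq \REnl J$ (a \emph{right-cover}). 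By the left--right symmetry of the set-up, it is enough to prove that $I$ has at most one left-cover.

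Assume, towards a contradiction, that $J_1\neq J_2$ are two distinct left-covers of $I$. Each $\LEnl{J_s}$ is a parallelogram-like region bounded above by the slope-$1$ segment $\LRoof{J_s}$, below by the parallel $\LFloor{J_s}$, on the right by the leftmost (precipitous) neon tube of $J_s$, and on the left by a segment of $\leftb L$. My plan is to show that $J_1$ and $J_2$ are comparable in $\tuple{\Lamps L;\le}$; the smaller one will then sit strictly between $I$ and the larger, contradicting the assumption that both cover $I$. Comparability will be read off from the geometric positions of the peaks and feet of $J_1, J_2$ via Corollary~6.1 of \cite{czgrectectdiag} (which translates the lattice order into the geometric ``below'' relation with respect to normal slopes), combined with parts~\eqref{lemmamaina} and~\eqref{lemmamainb} of Lemma~\ref{lemmamain}. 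Depending on whether $\LRoof{J_1}$ and $\LRoof{J_2}$ lie on the same slope-$1$ line or on two different ones, I expect to verify that either $\pair{J_1}{J_2}\in \rhalg$ or $\pair{J_2}{J_1}\in\rhinfoot$, delivering $J_1<J_2$ or $J_2<J_1$ respectively.

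The hard part will be making this geometric case analysis rigorous. Two subcases look delicate: when the two roofs lie on the same slope-$1$ line, where one must use that each circumscribed rectangle $\cirrec{J_s}$ has edges of $L$ as its sides (by \eqref{eqpbxwizgRpsrbrWnGskS}) and so cannot be crossed; and when one of the $J_s$ is a boundary lamp, which forces it to lie on the top-right boundary (since a top-left boundary lamp has $\LEnl$ of zero area by Definition~\ref{defenlstS} and cannot contain the body of an internal lamp). I expect the multifork-extension bookkeeping of \eqref{eqpbxHzwsrZstnC}--\eqref{eqpbxdhzWnhRgbBBsWt}, together with the fact that $\foot{J_s}$ lies in the topological interior of $\cirrec{J_s}$ (cf.\ Figure~\ref{figsn}), to handle both subcases cleanly.
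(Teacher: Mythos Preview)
Your reduction to $\Lamps L$ via Lemma~\ref{lemmamain} and the split of covers into ``left'' and ``right'' ones is exactly the paper's opening move: the paper sets $I\lprec J\defiff \cirrec I\subseteq\LEnl J$ and $I\rprec J\defiff\cirrec I\subseteq\REnl J$, and notes (your equation \eqref{eqtxtsmskntrksmkhsCpl}) that every cover of $I$ satisfies one or the other. From the proof of Lemma~\ref{lemmamain}\eqref{lemmamaina} (the step $\rhfoot\subseteq\lrhgeomc$) one sees that $\body I\subseteq\LEnl J$ forces $\cirrec I\subseteq\LEnl J$, so your ``left-cover'' is exactly the paper's $I\lprec J$.

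Where you diverge is in the key step. You plan to assume two left-covers $J_1\neq J_2$ and prove them comparable by a geometric case analysis on the positions of $\LRoof{J_s}$ and $\LFloor{J_s}$, invoking Corollary~6.1 of \cite{czgrectectdiag} and \eqref{eqpbxwizgRpsrbrWnGskS}. The paper instead proves the stronger statement \eqref{eqpbxthemsmwknZgKffts}: for each internal $I$ there is \emph{at most one} lamp $J$ whatsoever with $I\lprec J$ (not just among the covers), and does so in one sentence by a straightforward induction on the multifork sequence \eqref{eqpbxHzwsrZstnC}, using \eqref{eqtxtmpFrRHhW}. The point is that when $I$ is born at step~$i$, the $4$-cell $H_i=\cirrec I$ sits in the $\LEnl$ of a unique pre-existing lamp, and later multifork extensions cannot create a second such $J$ because $\cirrec I$ does not fit inside the $\LEnl$ of any newly created lamp.

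Your route should be completable---Lemma~\ref{lemmaseparatoraaligned} rules out the interleaved and floor-aligned configurations of $\LRoof{J_s},\LFloor{J_s}$, which prunes the case analysis considerably---but it is visibly heavier than the paper's argument and, as you yourself flag, the boundary-lamp and same-roof subcases need care. The paper's inductive route buys a one-line proof at the cost of leaning on the multifork structure theory; your route is more self-contained geometrically but pays for it in casework. Since the whole point of the paper's toolkit is to make \eqref{eqpbxHzwsrZstnC}--\eqref{eqtxtmpFrRHhW} available, the induction is the intended shortcut here.
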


\begin{proof} Since the case $|L|<3$ is trivial, we assume \eqref{eqtxtszHrsrZrknSzkMr}.
In virtue of Lemma~\ref{lemmamain}, we can work in $\Lamps L$ rather than in $\Jir D$. 
For lamps $I$ and $J$ of our slim rectangular lattice $L$, we define
\begin{equation}
\begin{aligned}
I\lprec J &\defiff \cirrec I\subseteq \LEnl L\quad\text{ and, similarly,}\cr
I\rprec J &\defiff \cirrec I\subseteq \REnl L.
\end{aligned}
\end{equation}
By parts \eqref{lemmamaina} and \eqref{lemmamaind} of Lemma~\ref{lemmamain}, for any $I,J\in\Lamps L$,
\begin{equation}
\text{if $I\prec J$, then $I\lprec J$ or $I\rprec J$;}
\label{eqtxtsmskntrksmkhsCpl}
\end{equation}
note that $I\lprec J$ and $I\rprec J$ can simultaneously hold.
Based on \eqref{eqtxtmpFrRHhW}, a straightforward induction on $i$ occurring in \eqref{eqpbxHzwsrZstnC} yields that 
\begin{equation}
\parbox{7.6cm}{for each $I\in\Lamps L$, there is at most one $J$ in $\Lamps L$ such that $I\lprec J$. Similarly, $I\rprec K$ holds for at most one $K\in \Lamps L$.}
\label{eqpbxthemsmwknZgKffts}
\end{equation}
Finally, \eqref{eqtxtsmskntrksmkhsCpl} and \eqref{eqpbxthemsmwknZgKffts} imply  Corollary~\ref{coroltwocover}.
\end{proof}

\begin{definition}
For non-horizontal parallel geometric \emph{lines} $T_1$ and $T_2$, we say that $T_1$ is \emph{to the left of} $T_2$ if 
$T_i$ is of the form $\set{\pair{a_i}0+t\cdot\pair{v_x}{v_y}: t\in\mathbb R}$ for $i\in\set{1,2}$ such that $a_1<a_2$. 
Here the vector $\pair{v_x}{v_y}$ is the common direction of $T_1$ and $T_2$ while $(a_i,0)$ is the intersection point of $T_i$ and the $x$-axis. We denote by $T_1\rellambda T_2$ that $T_1$ is left to $T_2$. For parallel \emph{line segments} $S_1$ and $S_2$ of positive lengths, we say that $S_1$ is \emph{to the left of} $S_2$, in notation, $S_1 \rellambda S_2$, if the line containing $S_1$ is to the left of the line containing $S_2$. Let us emphasize that 
if $S_1$ or $S_2$ is of zero length, then $S_1 \rellambda S_2$ fails! 
Next, let $L$ be a slim rectangular lattice, and let $J_0$ and $J_1$ be distinct lamps of $L$. With reference to Definition~\ref{defpPrZwsklsmnszk},  we say that $J_0$ and $J_1$ are \emph{left separatory} lamps if there is a (unique) $i\in\set{0,1}$ such that  
\begin{equation}
\LRoof {J_i} \rellambda \LRoof {J_{1-i}}  \rellambda 
\LFloor {J_i} \rellambda \LFloor {J_{1-i}}.
\label{eqxszRskMshTrP}
\end{equation}

Replacing the left roofs and left floors in \eqref{eqxszRskMshTrP} by right roofs and right floors, respectively, we obtain the concept of \emph{right separatory} lamps. We say that $J_0$ and $J_1$ are \emph{separatory lamps} if $J_0$ and $J_1$ are left separatory or right separatory. 
Finally, if the line segments $\LFloor {J_0}$ and $\LFloor {J_1}$ lie on the same line, then $J_0$ and $J_1$ are \emph{left floor-aligned}. If $\RFloor {J_0}$ and $\RFloor {J_1}$ are segments of the same line, then $J_0$ and $J_1$ are \emph{right floor-aligned}. They are \emph{floor-aligned} if they are left  floor-aligned or right  floor-aligned. 
\end{definition}

\begin{lemma} \label{lemmaseparatoraaligned}
If $I$ and $J$ are distinct lamps of 
a slim rectangular lattice, then these two lamps are neither separatory, nor floor-aligned.
\end{lemma}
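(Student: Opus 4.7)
My plan is to proceed by contradiction, exploiting the left--right symmetry of $\bdia$-diagrams so that it suffices to rule out the \emph{left} floor-aligned and the \emph{left} separatory configurations. For any line segment $S$ of slope $+1$, set $c(S) := y - x$ (which is constant on $S$); then $S_1 \rellambda S_2$ is equivalent to $c(S_1) > c(S_2)$, and ``$S_1, S_2$ lie on the same line'' is equivalent to $c(S_1) = c(S_2)$. The tools I intend to use are \eqref{eqpbxZhRsjPzrSt}, \eqref{eqpbxnfhRmsrlmrtlTslVz}, Lemma~\ref{lemmalfT}, Conventions~\ref{convxpllzT} and~\ref{convsRhRtskTskk}, Corollary~6.1 of Cz\'edli~\cite{czgrectectdiag}, and the multifork extension history given by~\eqref{eqpbxHzwsrZstnC}--\eqref{eqtxtmpFrRHhW}.

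For the left floor-aligned case, suppose $\LFloor I$ and $\LFloor J$ lie on a common line $\ell$ of slope $+1$, so that $\foot I, \foot J \in \ell$. Since distinct lamps have distinct foots (by Lemma~\ref{lemmalfT} combined with Convention~\ref{convsRhRtskTskk}), I may assume without loss of generality that $\foot J$ lies strictly below $\foot I$ on $\ell$. By \eqref{eqpbxZhRsjPzrSt} the lattice elements on $\LFloor I$ form the chain $[\cornl L \wedge \foot I, \foot I]$ whose edges all have slope $+1$; hence $\foot J$ belongs to this chain with $\foot J < \foot I$, and its upper cover in the chain is reached by a normal-slope edge. I would then extract the contradiction from the lamp structure at $\foot J$: by Lemma~\ref{lemmalfT} and the multifork stage at which $J$ is born, the upward edges from $\foot J$ that span the body of $J$ must either be precipitous neon tubes (if $J$ is internal) or run along the upper boundary (if $J$ is a boundary lamp), both of which are incompatible with $\foot J$ being an interior vertex of the all-normal-slope chain $[\foot J, \foot I]$ on $\LFloor I$.

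For the left separatory case, after possibly swapping $I$ and $J$ I may assume $\LRoof I \rellambda \LRoof J \rellambda \LFloor I \rellambda \LFloor J$, i.e., $c(\LRoof I) > c(\LRoof J) > c(\LFloor I) > c(\LFloor J)$. The middle inequality places $\Peak J$ strictly between the lines of $\LRoof I$ and $\LFloor I$, which together with \eqref{eqpbxnfhRmsrlmrtlTslVz} puts $\Peak J$ into the interior of $\LEnl I$; the last inequality places $\foot J$ strictly to the right of the line of $\LFloor I$, hence outside $\Enl I$. Therefore $\body J$ would have to cross the line through $\LFloor I$ from above, but this line is realized by a chain of normal-slope edges of $L$, so any such crossing would require an edge of $J$ to transversally intersect one of these edges, contradicting planarity and the precipitous/normal-slope dichotomy of Convention~\ref{convxpllzT}.

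I expect the main obstacle to be the floor-aligned case: the separatory case reduces cleanly to a non-crossing argument via Corollary~6.1 of \cite{czgrectectdiag} and the geometric description of $\Enl I$, but for floor-alignment one must pinpoint why $\foot J$ cannot sit strictly below $\foot I$ on a single slope-$+1$ chain. This presumably requires a case split on whether $J$ is internal or boundary, together with a close inspection of the multifork step at which $J$ comes into existence.
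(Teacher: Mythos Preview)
Your overall strategy (contradiction via a crossing/planarity argument, reducing by left--right symmetry) matches the paper's, and your treatment of the separatory case is close in spirit to what is needed. However, the floor-aligned case contains a genuine gap. You assert that for an internal lamp $J$ ``the upward edges from $\foot J$ that span the body of $J$ must be precipitous neon tubes.'' This is false as soon as $J$ has more than one neon tube: by Definition~\ref{defgshscdzrkrrl}\eqref{defgshscdzrkrrla}, $\body J$ is then a quadrangle whose two \emph{lower} sides (the edges emanating upward from $\foot J$) are of \emph{normal} slope, while only the two upper sides are precipitous neon tubes. So nothing prevents one of the two covers of $\foot J$ from being reached along the slope-$+1$ chain $\LFloor I$, and your incompatibility claim collapses. (You correctly anticipated that this case would be the hard one; the specific mechanism you propose does not survive the multi-neon-tube situation.)

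The paper sidesteps this difficulty by working with the \emph{circumscribed rectangle} $\cirrec J$ rather than $\body J$. The two key facts are that $\foot J$ lies in the topological \emph{interior} of $\cirrec J$, and that by \eqref{eqpbxwizgRpsrbrWnGskS} the two upper sides of $\cirrec J$ are single prime intervals of $L$. Combined with \eqref{eqpbxZtSrptnK} (so that $\LRoof{J_i}$ and $\LFloor{J_i}$ consist of edges), any of the separatory or floor-aligned $c$-value configurations forces one of $\LRoof{J_i}$, $\LFloor{J_i}$ to pass through the interior of $\cirrec{J_{1-i}}$ and hence to cross its upper right (or, symmetrically, upper left) edge---a planarity contradiction that is insensitive to the number of neon tubes. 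This single argument also dispenses with your case split between ``floor-aligned'' and ``separatory''. A secondary issue in your separatory paragraph is that you place $\Peak J$ in ``the interior of $\LEnl I$'' using only $c$-value information; $\LEnl I$ is bounded on the right by the neon tubes of $I$, so the $c$-inequalities alone do not locate $\Peak J$ inside it, and the crossing you describe may occur off the segment $\LFloor I$. Routing the argument through $\cirrec{J_{1-i}}$ instead of $\LEnl I$ fixes this as well.
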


\begin{proof} To prove this by contradiction, suppose that the lemma fails. Let $J_0:=I$ and $J_1:=J$.  Using \eqref{eqpbxZtSrptnK}, \eqref{eqpbxwizgRpsrbrWnGskS}, and that $\foot {J_j}$ is in the (topological)  interior of $\cirrec {J_j}$ for $j\in\set{0,1}$, we can find an $i\in\set{0,1}$ such that $\LRoof {J_i}$ or $\LFloor{J_i}$ crosses the upper right edge of $\cirrec{J_{1-i}}$ or left-right symmetrically. This contradicts planarity and completes the proof. Alternatively, we can use an induction on $i$ occurring in \eqref{eqpbxHzwsrZstnC}.
\end{proof}

Since this section is intended to be a ``toolkit'', we formulate the following lemma here; not only its proof but also its complete formulation are left to the next section.

\begin{lemma}\label{lemmadlgzTmsrT}
If $L$ is a slim rectangular lattice, then $\Lamps L=\Lamps {L_k}$ satisfies \eqref{eqpbxkszPrhfWgyxrszrTrl}.
\end{lemma}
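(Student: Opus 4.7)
The plan is to argue by induction on the length $k$ of the multifork extension sequence $L_0,L_1,\dots,L_k=L$ guaranteed by Lemma~\ref{lemmamrkXtnDsZs} and recorded in \eqref{eqpbxHzwsrZstnC}. The identity $\Lamps L=\Lamps{L_k}$ already fixes the right-hand side, so what really has to be propagated along the sequence is whatever property \eqref{eqpbxkszPrhfWgyxrszrTrl} asserts about lamps; the natural move is to prove the corresponding statement for each $\Lamps{L_i}$, $0\le i\le k$, and read off the case $i=k$.

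For the base case, $L_0$ is a grid, so $\Lamps{L_0}$ consists only of boundary lamps, whose neon tubes sit on the upper boundary and whose floors, roofs, bodies, and circumscribed regions are determined by the grid coordinates. I would check the target property by direct inspection; the combinatorial simplicity of the grid should make this routine.

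For the inductive step, I would exploit the very tight control that \eqref{eqtxtmpFrRHhW} gives over the passage $L_{i-1}\rightsquigarrow L_i$: a single new internal lamp $K_i$ is created by the multifork extension at the distributive 4-cell $H_i$, and its circumscribed rectangle $\cirrec{K_i}$ coincides with the planar region of $H_i$. Crucially, \eqref{eqpbxdhzWnhRgbBBsWt} combined with \eqref{eqpbxZtSrptnK} guarantees that the illuminated sets, roofs, and floors of all pre-existing lamps remain literally unchanged as subsets of $\mathbb R^2$ (the bounding edges may be subdivided but no geometric line segment disappears). The inductive step therefore reduces to: (a) verifying the relevant clause of the property for the newly born $K_i$, and (b) checking that the interactions between $K_i$ and the old lamps of $L_{i-1}$ are consistent with the inductive hypothesis.

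The main obstacle is the bookkeeping in part (b): ruling out geometric configurations in which the newly inserted neon tubes, floor, or roof of $K_i$ could conflict with the pre-existing lamp structure. Here I would lean on \eqref{eqpbxJrNfhgtWcBvmb} (in a $\bdia$-diagram, a distributive $4$-cell has all edges of normal slopes, so $\cirrec{K_i}$ has sides of normal slopes), \eqref{eqpbxZhRsjPzrSt} (roofs and floors correspond to chains whose edges share a common normal slope), and the impossibility of two edges crossing in a planar diagram, as used already via Kelly and Rival~\cite{kellyrival} in the proof of Corollary~\ref{coroldioec}. Together with the explicit shape of the inserted copy of $\Sn{n_i}$ from Figure~\ref{figsn}, these ingredients should permit a case analysis along the lines used in the proof of Lemma~\ref{lemmamain} (particularly the argument accompanying Figure~\ref{figftnt}) that closes the induction and yields $\Lamps L=\Lamps{L_k}\models\eqref{eqpbxkszPrhfWgyxrszrTrl}$.
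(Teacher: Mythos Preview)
Your induction along the multifork sequence \eqref{eqpbxHzwsrZstnC} with the tools you cite (\eqref{eqtxtmpFrRHhW}, \eqref{eqpbxZtSrptnK}, \eqref{eqpbxdhzWnhRgbBBsWt}, \eqref{eqpbxJrNfhgtWcBvmb}, planarity) is exactly the paper's approach; the lemma is in fact proved inside the proof of Theorem~\ref{thmmain}, where \eqref{eqpbxkszPrhfWgyxrszrTrl} is finally stated (for a fixed boundary lamp $Z$: the lower covers of $Z$ in $\Lamps{L_i}$ are pairwise independent and $F(Z)$ has no gap). The concrete mechanism your schematic step~(b) needs is that a new lower cover $T\prec Z$ appearing in $L_{i+1}$ is pinned down by two exclusions---$\cirrec T\not\subseteq\Enl G$ for any existing $G<Z$ (else $\pair T G\in\rhgeomc$ would give $T<G<Z$, contradicting $T\prec Z$), and no existing lower cover of $Z$ lies south-east of $T$ (else the $4$-cell $\cirrec T$ would fail distributivity)---which force $T$ into one of the admissible cells of Figure~\ref{figfbmr} and thereby preserve both clauses of \eqref{eqpbxkszPrhfWgyxrszrTrl}.
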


\begin{figure}[htb]
\centerline
{\includegraphics[scale=1.1]{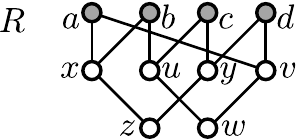}}
\caption{Two-pendant four-crown}\label{figfcrwn}
\end{figure}

\section{Two new properties of congruence lattices of slim planar semimodular lattices}\label{sectionfourcrown}
For posets $X$ and $Y$, we say that $X$ is a \emph{cover-preserving subposet} of $Y$ if $X\subseteq Y$ and, for all $u,v\in X$, 
$u\leq_X v\iff u\leq_Y v$ and $u\prec_X v\iff u\prec_Y v$.
The poset $R$ given in Figure~\ref{figfcrwn} will be called the \emph{two-pendant four-crown}; it is a four-crown decorated with two ``pendants'', $z$ and $w$. 
This section is devoted to the following two properties.

\begin{definition}[Two-pendant four-crown property]\label{deffcRwnpr} We say that a finite distributive lattice $D$  satisfies the \emph{two-pendant four-crown property} if  $R$ given in Figure~\ref{figfcrwn} is not a cover-preserving subposet of $\Jir{D}$ such that the maximal elements of $R$ are maximal in $\Jir{D}$.
\end{definition}

\begin{definition}[Forbidden marriage property]\label{defbDdNmRpzp}
 We say that a finite distributive lattice $D$  satisfies the \emph{forbidden marriage property} if for every $x,y\in \Jir D$ and $z\in\Max{\Jir D}$, if $x\neq y$, $x\jirdprec z$, and $y\jirdprec z$, then there is no $p\in \Jir D$ such that $p\jirdprec x$ and $p\jirdprec y$.
\end{definition}

Now we are in the position to formulate the main theorem of the paper.

\begin{theorem}[Main Theorem]\label{thmmain} If $L$ is a slim planar semimodular lattice, then 
\begin{enumeratei}
\item\label{thmmaina} 
$\Con L$ satisfies the 
the forbidden marriage property, and 
\item\label{thmmainb} 
$\Con L$ satisfies the two-pendant four-crown property;
\end{enumeratei}
see Definitions~\ref{deffcRwnpr} and \ref{defbDdNmRpzp}.
\end{theorem}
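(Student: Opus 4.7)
By the congruence-preserving rectangular extension recalled in \eqref{eqtxtszHrsrZrknSzkMr}, it suffices to prove both parts for a slim rectangular lattice $L$ with a fixed $\bdia$-diagram. Under the isomorphism $\phi$ of part~\eqref{lemmamainc} of Lemma~\ref{lemmamain}, and recalling from Lemma~\ref{lemmaXcxbndry} that the maximal elements of $\Lamps L$ are exactly the boundary lamps, both properties translate to statements about the lamp poset $\Lamps L$. The running machinery is this: by parts~\eqref{lemmamaina} and~\eqref{lemmamaind} of Lemma~\ref{lemmamain} combined with \eqref{eqtxtsmskntrksmkhsCpl}, every cover $I\prec J$ in $\Lamps L$ is witnessed by either $I\lprec J$ or $I\rprec J$; and by \eqref{eqpbxthemsmwknZgKffts}, each lamp has at most one $\lprec$-cover and at most one $\rprec$-cover, hence at most two covers in all.

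For part~\eqref{thmmaina} (forbidden marriage), suppose for contradiction that there exist distinct lamps $I,J$, a maximal lamp $K$, and a lamp $P$ with $I,J\prec K$ and $P\prec I$, $P\prec J$. Since $P$ has two distinct covers, \eqref{eqpbxthemsmwknZgKffts} forces, after a possible left-right swap, $P\lprec I$ and $P\rprec J$, equivalently $\cirrec P\subseteq\LEnl I\cap\REnl J$. By Lemma~\ref{lemmaXcxbndry}, $K$ is a boundary lamp; by left-right symmetry assume $K$ lies on the top left chain, whence $\LEnl K$ has zero area and $\Enl K=\REnl K$. Hence $\cirrec I,\cirrec J\subseteq\Enl K=\REnl K$, giving $I\rprec K$ and $J\rprec K$. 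The plan is now to invoke Lemma~\ref{lemmadlgzTmsrT} of Section~\ref{sectioneasycons} (whose complete statement and proof belong to Section~\ref{sectionfourcrown}), whose purpose is exactly to rule out this planar configuration: combining planarity of the $\bdia$-diagram with Lemma~\ref{lemmaseparatoraaligned} (distinct lamps are neither separatory nor floor-aligned) and \eqref{eqpbxwizgRpsrbrWnGskS} (the upper two sides of each circumscribed rectangle are lattice edges), one produces either an edge crossing or a lamp lying strictly between $P$ and $I$ in $\Lamps L$, in either case contradicting $P\prec I$.

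For part~\eqref{thmmainb} (two-pendant four-crown), suppose for contradiction that the poset $R$ of Figure~\ref{figfcrwn} embeds as a cover-preserving subposet of $\Lamps L$ with its two top elements $U,V$ maximal in $\Lamps L$, middle elements $X,Y$ satisfying $X,Y\prec U$ and $X,Y\prec V$, and pendants $Z\prec X$, $W\prec Y$ (and no other cover relations among the six). By Corollary~\ref{corolnWdWr}, $U$ and $V$ lie on opposite top chains; say $U$ on the top left and $V$ on the top right. Arguing as in the proof of Corollary~\ref{coroltwocover} via \eqref{eqpbxthemsmwknZgKffts}, we get $X\rprec U$, $X\lprec V$, $Y\rprec U$, $Y\lprec V$, so $\cirrec X,\cirrec Y\subseteq\REnl U\cap\LEnl V$. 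Part~\eqref{thmmaina} just established, applied to $U$ and the pair $X,Y$, already forbids any common lower cover of $X$ and $Y$. The remaining task is to show that $Z,W$ cannot fulfil the pendant conditions: the relations $\cirrec Z\subseteq\Enl X$ and $\cirrec Z\not\subseteq\Enl Y$ (the latter forced by $Z\not\prec Y$ in the cover-preserving embedding), together with $\cirrec X,\cirrec Y$ lying in the same lens $\REnl U\cap\LEnl V$, force by Lemma~\ref{lemmaseparatoraaligned} applied to $X,Y$ and by Lemma~\ref{lemmadlgzTmsrT} either an additional cover relation ruled out by the embedding or an intermediate lamp strictly between $Z$ and $X$, contradicting $Z\prec X$; symmetrically for $W$.

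The main obstacle is the planar case analysis: once sidedness of each cover is pinned down by the two-cover machinery, the remaining work is to rule out the short list of planar configurations of circumscribed rectangles inside a $\bdia$-diagram. The cleanest organization is an induction along the multifork extension sequence \eqref{eqpbxHzwsrZstnC} from Lemma~\ref{lemmamrkXtnDsZs}, tracking how $\Lamps L$ evolves when a distributive $4$-cell is replaced by a copy of $\Sn n$; this is presumably the book-keeping content of Lemma~\ref{lemmadlgzTmsrT}. A subtler delicate point is verifying that the absence of ``diagonal'' cover relations in the cover-preserving embedding of $R$ (such as $Z\not\prec Y$ and $W\not\prec X$) translates cleanly to the expected $\Enl$-non-inclusions, so that the geometric lemmas bite in the right places.
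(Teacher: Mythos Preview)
Your reduction to a slim rectangular $L$ and the passage to $\Lamps L$ via Lemma~\ref{lemmamain} and Lemma~\ref{lemmaXcxbndry} are exactly right, and for part~\eqref{thmmaina} you have correctly identified Lemma~\ref{lemmadlgzTmsrT} (that is, \eqref{eqpbxkszPrhfWgyxrszrTrl}) as the key ingredient. But you conflate the \emph{proof} of that lemma (an induction along the multifork sequence \eqref{eqpbxHzwsrZstnC}, using planarity to constrain where the new $4$-cell can sit) with its \emph{application}. The lemma says that any two lower covers $I,J$ of a boundary lamp $K$ are \emph{independent} in the sense of \eqref{eqpbxfszKswhvRbg}. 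Once you have that, the contradiction is immediate via \eqref{eqpbxsfkggdNgn}: independence forces $\Enl I$ and $\Enl J$ to be sufficiently disjoint, while $P\prec I$ and $P\prec J$ give $\pair P I,\pair P J\in\rhgeomc$ and hence $\cirrec P\subseteq\Enl I\cap\Enl J$, which has positive area. Your sidedness bookkeeping ($P\lprec I$, $P\rprec J$) and the talk of edge crossings or intermediate lamps between $P$ and $I$ are unnecessary here; they belong, if anywhere, inside the proof of \eqref{eqpbxkszPrhfWgyxrszrTrl}, not to its use.

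For part~\eqref{thmmainb} there is a genuine gap: you have misread the poset $R$. The two-pendant four-crown of Figure~\ref{figfcrwn} has \emph{four} maximal elements $a,b,c,d$, four middle elements forming a crown ($x\prec a,b$; $u\prec b,c$; $y\prec c,d$; $v\prec d,a$), and pendants $z\prec x,y$ and $w\prec u,v$; ten elements in all. Your six-element poset with two maximal elements $U,V$ and pendants $Z\prec X$, $W\prec Y$ (each pendant hanging from a single middle element) is a different and strictly weaker configuration. The paper's argument hinges on having four boundary lamps $A,B,C,D$ that alternate between the two top boundary chains (Corollary~\ref{corolnWdWr} is applied to each consecutive pair in the cycle $A,B,C,D,A$): after arranging $C$ above $A$ on the left chain and $D$ above $B$ on the right, one gets $\body X\subseteq\Enl A\cap\Enl B$ and $\body Y\subseteq\Enl C\cap\Enl D$ lying in vertically separated strips, so $\Enl X$ and $\Enl Y$ are sufficiently disjoint; but the pendant $Z\prec X$, $Z\prec Y$ then yields $\cirrec Z\subseteq\Enl X\cap\Enl Y$, a contradiction. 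In your version $X$ and $Y$ sit under the \emph{same} two maximal lamps, so no such vertical separation is available, and indeed your argument does not close: after (correctly) noting that part~\eqref{thmmaina} forbids a common lower cover of your $X,Y$, you are left with pendants $Z\prec X$ and $W\prec Y$ that carry no contradiction whatsoever, and your closing paragraph rightly senses that something is missing.
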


\begin{remark}\label{remskzndDWnlc}
The smallest distributive lattice $D$ that fails to satisfy the forbidden marriage property is the eight-element $D_8$ given in Cz\'edli~\cite{czganotesps}. 
Now the result of \cite{czganotesps}, stating that $D_8$ cannot be represented as the congruence lattice of a slim planar semimodular lattice, becomes an immediate consequence of Theorem~\ref{thmmain}. In fact, part \eqref{thmmaina} of Theorem~\ref{thmmain} is a generalization of \cite{czganotesps}.
\end{remark}

\begin{remark}\label{remarksFrzg}
By the well-known representation theorem of finite distributive lattices, see, for example, Gr\"atzer~\cite[Theorem 107]{r:Gr-LTFound}, there is a unique finite distributive lattice $D_R$ such that  $\Jir {D_R}\cong R$. 
By Theorem~\ref{thmmain}\eqref{thmmainb}, there is no slim planar semimodular lattice $L$ such that $\Con L\cong D_R$. Since $D_R$ satisfies the properties mentioned in Corollaries~\ref{corollstzRsG}--\ref{coroltwocover}, so  all the previously known properties, and even the forbidden marriage property, it follows that  part \eqref{thmmainb} of Theorem~\ref{thmmain} is  really a new result.
\end{remark}

\begin{remark}\label{remarkshlmS}
A straightforward calculation shows that  $D_R$ mentioned in Remark~\ref{remarksFrzg} consists of $56$ elements.  Furthermore, we are going to prove that  every finite distributive lattice with less than 56 elements satisfies the two-pendant four-crown property. 
\end{remark}

\begin{figure}[htb]
\centerline
{\includegraphics[scale=1.0]{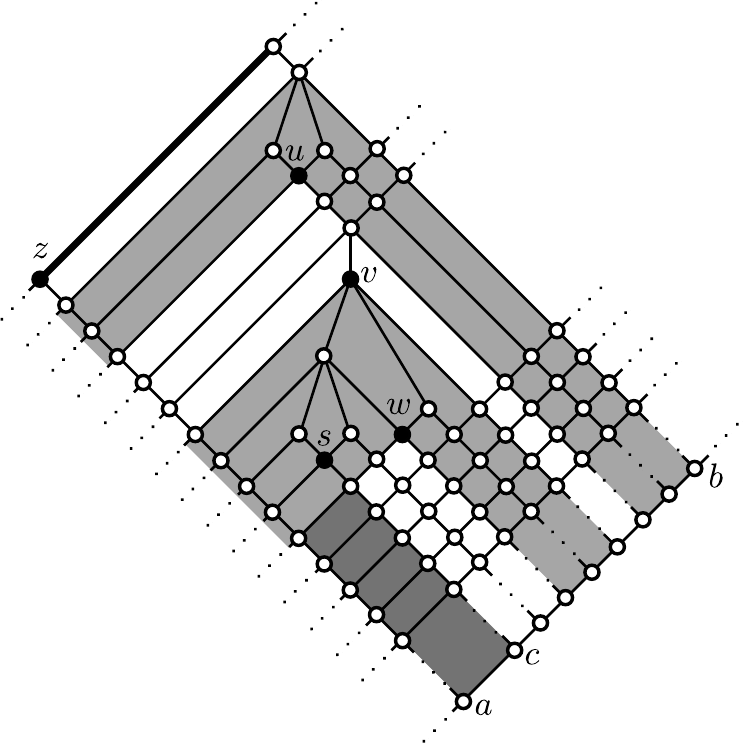}}
\caption{Illustrating the proof of \eqref{eqpbxkszPrhfWgyxrszrTrl}}\label{figfbmr}
\end{figure}

\begin{proof}[Proof of Theorem~\ref{thmmain}] As usual, the case $|L|<3$ is trivial. Hence,  \eqref{eqtxtszHrsrZrknSzkMr} is assumed. Also, we use the notation $D:=\Con L$. By Lemma~\ref{lemmamain}, we can also assume that $\Jir D=\Lamps L$. For $J_0, J_1\in\Lamps L$, we say that
\begin{equation}
\parbox{8cm}{the lamps $J_0$ and $J_1$ are  \emph{independent} if there is a (unique) $i\in\set{0,1}$ such that $\Peak{J_i}\leq \foot {J_{1-i}}$. 
}\label{eqpbxfszKswhvRbg}
\end{equation}

First, we deal with part \eqref{thmmaina}, that is,  with the forbidden marriage property. 
In Figure~\ref{figfbmr}, which is either $L$ or only a part (in fact, an interval) of $L$, there are four internal lamps,
$S$, $U$, $V$, and $W$; their foots are $s$, $u$, $v$, and $w$, respectively. In this figure, for example, $U$ and $V$ are independent but $S$ and $W$ are not. Actually, $\set{S,W}$ is the only two-element subset of $\set{S,U,V,W}$ whose two members are not independent.
It follows from \eqref{eqpbxnfhRmsrlmrtlTslVz}
and Corollary 6.1 of Cz\'edli~\cite{czgrectectdiag}  that, using the terminology of \eqref{eqpbxsfDjsknkTs},
\begin{equation}
\parbox{7cm}{if $J_0$ and $J_1$ are independent lamps, then 
$\Enl{J_0}$ and $\Enl{J_1}$ are sufficiently disjoint.}
\label{eqpbxsfkggdNgn}
\end{equation}
Now assume that $Z$ is a boundary lamp. By left-right symmetry, we can assume that it is on the top left boundary chain; see Figure~\ref{figfbmr} where $z=\foot Z$. The intersection of $\Enl Z=\REnl Z$ with the right boundary chain $\rightb L$ will be denoted by $E(Z)$; it is the (topologically closed) line segment with endpoints $a$ and $b$ in the Figure. Also, 
we define the following set of geometric points
\[
F(Z):=\{q\in E(Z): (\exists U\in \Lamps L)\,(\pair U Z \in\rhgeomc \text{ and }q\in\Enl U)\}.
\]
For example, $F(Z)$ in Figure~\ref{figfbmr} is the line segment with endpoints $b$ and $c$.  If $F(Z)=\emptyset$ or $F(Z)$ is a line segment with its upper endpoint being the same as that of $E(Z)$, then we say that \emph{there is no gap in $F(Z)$}. For example, there is no gap in $F(Z)$ in Figure~\ref{figfbmr}.
With reference to \eqref{eqpbxHzwsrZstnC} and \eqref{eqtxtmpFrRHhW}, we claim that
\begin{equation}
\parbox{6.2cm}{for $i=0,1,\dots,k$, the lower covers of $Z$ in 
$\Lamps{L_i}$ are pairwise independent and, still in $L_i$, there is no gap in $F(Z)$;}
\label{eqpbxkszPrhfWgyxrszrTrl}
\end{equation}
note that our boundary lamp $Z$ is in $L_0$ and so 
\eqref{eqpbxkszPrhfWgyxrszrTrl} makes sense.  
Note also that \eqref{eqpbxkszPrhfWgyxrszrTrl} implies Lemma~\ref{lemmadlgzTmsrT}, because $L=L_k$.

We prove \eqref{eqpbxkszPrhfWgyxrszrTrl} by induction on $i$. The case $i=0$ is trivial since $Z$ has no lower cover in $\Lamps{L_0}$.
In Figure~\ref{figfbmr}, $Z$ has three lower covers: $U$, $V$ and $W$.  (Since $S<W<Z$,  $S$ is not a lower cover.) 
Assume that this figure is the relevant part (that is, $\Enl Z$) of $L_i$ for some $i\in\set{0,1,\dots, k-1}$. Assume also that in $L_{i+1}$, 
$Z$ obtains a new lower cover, $T$. 
We know from \eqref{lemmamaina} and \eqref{lemmamaind} of Lemma~\ref{lemmamain} that $\pair T Z\in\rhgeomc$.
Due to \eqref{eqpbxHzwsrZstnC} and  \eqref{eqtxtmpFrRHhW}, $\cirrec T$ is a distributive $4$-cell in the figure. 
For every lamp $G\in\Lamps{L_i}$ such that $G< Z$ (in particular, if $\pair G Z\in\rhgeomc$), 
$\cirrec T$ cannot be a 4-cell of $\Enl G$ since otherwise $\pair T G\in \rhgeomc$
would lead to $T<G<Z$, contradicting $T\prec Z$. Also, there can be no $G\in \Lamps{L_i}$ such that $\pair G Z\in\rhgeomc$ and $G$ is to the ``south-east'' of $T$, because otherwise $\cirrec T$ would not be distributive. It follows that $T$ is one of the dark-grey cells in the figure, whereby even in $L_{i+1}$, the lower covers of $Z$ remain pairwise independent and
there is no gap in $F(Z)$. Since the figure clearly represents generality, we are done with the induction step from $i$ to $i+1$. This proves  \eqref{eqpbxkszPrhfWgyxrszrTrl}.

Finally, Lemmas~\ref{lemmamain} and \ref{lemmaXcxbndry} translate
part \eqref{thmmaina} of Theorem~\ref{thmmain} to the following statement on $\Lamps L$:
\begin{equation}
\parbox{9.5cm}{if $Z$ is a boundary lamp, $X\prec Z$,  $Y\prec Z$, and $X\neq Y$, then there exists no $P\in \Lamps L$ such that $P\prec X$ and $P\prec Y$.}
\label{eqkmscKpszGTsj}
\end{equation}
To see this, assume the premise. For the sake of contradiction, suppose that there does exist a $P$ described in \eqref{eqkmscKpszGTsj}. By \eqref{lemmamaina} and \eqref{lemmamaind} of  Lemma~\ref{lemmamain}\eqref{lemmamaind}, $\pair P X\in \rhgeomc$ and   $\pair P Y\in \rhgeomc$. Hence, $\cirrec P\subseteq \Enl X\cap\Enl Y$, whereby $\Enl X$ and $\Enl Y$ are not sufficiently disjoint. 
On the other hand, we obtain from  $L=L_k$ and \eqref{eqpbxkszPrhfWgyxrszrTrl} that $\Enl X$ and $\Enl Y$ are independent, whence they are sufficiently disjoint by \eqref{eqpbxsfkggdNgn}. This is a contradiction proving \eqref{eqkmscKpszGTsj} and part \eqref{thmmaina} of Theorem~\ref{thmmain}.

Next, we deal with part \eqref{thmmainb}. 
For the sake of contradiction, suppose that $D=\Con L$
fails to satisfy the two-pendant four-crown property. 
This assumption and Lemma~\ref{lemmamain} yield that  $R$ is a cover-preserving subposet of $\Lamps L=\Jir D$ such that the four maximal elements of $R$ are also maximal in $\Lamps L$; see  Definition~\ref{deffcRwnpr}.
For $a,b,\dots \in R$, the corresponding lamp  will be denoted by $A$, $B$, \dots, that is, by the capitalized version of the notation used in Figure~\ref{figfcrwn}.  
By Lemma~\ref{lemmaXcxbndry}, 
$A,B,C,D$ are boundary lamps. Each of these four lamps is on the top left boundary chain or on the top right boundary chain.

By Corollary~\ref{corolnWdWr},
any two consecutive members of the sequence $A,B,C,D,A$ 
belong to different top boundary chains since they
have a common lower cover. By left-right symmetry, we can assume that $A$ and $C$ are on the top left boundary chain while $B$ and $D$ on the top right one. 
We can assume that $C$ is above $A$ in the sense that $\foot A<\foot C$ 
since otherwise we can relabel $R$ according to the ``rotational'' automorphism that restricts to $\set{a,b,c,d}$ as 
\begin{equation*}
\begin{pmatrix}
a&b&c&d\cr
c&d&a&b
\end{pmatrix}.
\end{equation*}
Also, we can assume that $D$ is above $B$ since otherwise we can extend
\begin{equation*}
\begin{pmatrix}
a&b&c&d\cr
a&d&c&b
\end{pmatrix}.
\end{equation*}
to a ``reflection'' automorphism of $R$ and relabel $R$ accordingly. Note that $A$ and $C$ are not necessarily neighboring boundary lamps, that is, we have $\Peak A\leq \foot C$  but $\Peak A<\foot C$ need not hold. 
Similarly, we only have that $\Peak B\leq \foot D$. The situation is outlined in Figure~\ref{figabcd}. The foots of lamps in the figure are black-filled and any of the two distances marked by curly brackets can be zero. The illuminated sets $\Enl X$ and $\Enl Y$ are dark-grey while $\Enl A$ and $\Enl C$ are (dark and light) grey. Since $X\prec A$ and $X\prec B$, we know from
\eqref{lemmamaina} and \eqref{lemmamaind} of Lemma~\ref{lemmamain} that $\pair  X A\in\rhgeomb$ and $\pair  X B\in\rhgeomb$. Hence, $\body X\subseteq \Enl A\cap \Enl B$,
 in accordance with the figure. Similarly,  $Y\prec C$ and $Y\prec D$ lead to $\body Y\subseteq \Enl C\cap \Enl D$, as it is indicated in Figure~\ref{figabcd}. Note that due to \eqref{defgshscdzrkrrl}, the figure is satisfactorily correct in this aspect.
Hence, the \ashape-shaped $\Enl Y$ is above the \ashape-shaped $\Enl X$. Thus, using \eqref{eqpbxnfhRmsrlmrtlTslVz}, we obtain that 
\begin{equation}
\text{$\Enl X$ and $\Enl Y$ are sufficiently disjoint;}
\label{eqtxtkzslggdKnmsMs}
\end{equation}
see \eqref{eqpbxsfDjsknkTs} for this concept. 
On the other hand, $Z\prec X$ and $Z\prec Y$ together with \eqref{lemmamaina} and \eqref{lemmamaind} of
Lemma~\ref{lemmamain} gives that $\pair Z X\in\rhgeomb$ and 
$\pair Z Y\in\rhgeomb$. Hence, $\body Z\subseteq \Enl X\cap \Enl Y$. Thus, since $Z$ is an internal lamp by Lemma~\ref{lemmaXcxbndry}, $\Enl X\cap \Enl Y$ contains a precipitous neon tube. This contradicts \eqref{eqtxtkzslggdKnmsMs}. 
Note that there is another way to get a contradiction: since 
$\pair Z X\in\rhgeomc$ and $\pair Z Y\in\rhgeomc$, we have that $\cirrec Z \subseteq \Enl X\cap \Enl Y$, which contradicts the fact that $\cirrec Z$ is of positive area (two-dimensional measure) while $\Enl X\cap \Enl Y$ is of area 0. Any of the two contradictions in itself implies part \eqref{thmmainb} and completes the proof of Theorem~\ref{thmmain}.
\end{proof}

\begin{figure}[htb]
\centerline
{\includegraphics[scale=1.0]{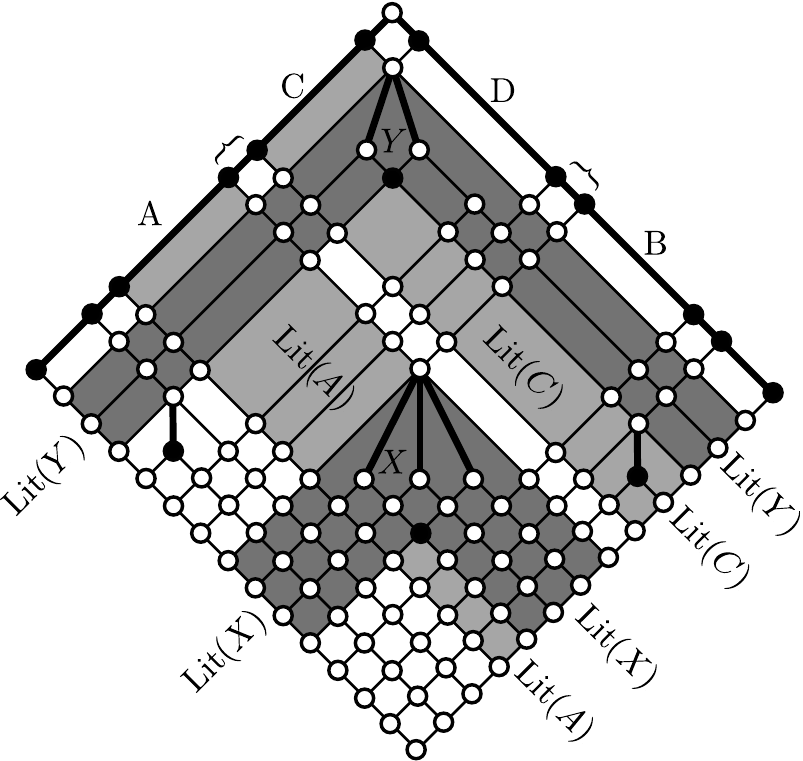}}
\caption{Illustrating the proof of Theorem~\ref{thmmain}\eqref{thmmainb} \label{figabcd}}
\end{figure}

\begin{proof}[Proof of Remark~\ref{remarkshlmS}] 
For the sake of contradiction, suppose that  there exists a distributive lattice $D$ such that $D$ fails to satisfy the two-pendant four-crown property but $|D|<56$. Let $Q:=\Jir D$. By our assumption, $R$ is a subposet of the poset $Q$. Denote by 
$\Her R$ and $\Her Q$ the lattice of down sets (that is, order ideals and the emptyset) of $R$ and $Q$, respectively.  For $X\subseteq Q$ , let $\ideal_Q X:=\{y\in Q: y\leq x$ for some $x\in X\}$. The map $\phi\colon \Her Q\to \Her R$, defined by $X\mapsto X\cap R$ is surjective since, for each $Y\in\Her R$, we have that $\ideal_Q Y\in\Her Q$ and $\phi(\ideal_Q Y)=Y$. Hence, using 
the structure theorem mentioned in Remark~\ref{remarksFrzg}, 
$|D|=|\Her Q|\geq |\Her R|=|D_R|=56$, which is a contradiction proving Remark~\ref{remarkshlmS}.
\end{proof}

We conclude the paper with a last remark.

\begin{remark} Lamps have several geometric properties. 
Many of these properties have already been mentioned, and there are some other properties of technical nature, too.  These properties 
would allow us to represent the congruence lattices of slim planar semimodular lattices in a purely geometric (but quite technical) way. However, this does not seem to be more useful than our technique based on \eqref{eqpbxHzwsrZstnC}--\eqref{eqtxtmpFrRHhW} and the tools presented in the paper. 
\end{remark}

\begin{addedone} Since January 8, 2021, when the first version of the present paper was uploaded to arXiv, the tools developed here have successfully been used in Cz\'edli~\cite{czgaxiombipart} and  Cz\'edli and Gr\"atzer~\cite{czgginprepar}.
\end{addedone}

\end{document}